\newfont{\footsc}{cmcsc10 at 8truept}
\newfont{\footbf}{cmbx10 at 8truept}
\newfont{\footrm}{cmr10 at 10truept}
\newtheorem{theorem}{Theorem}
\newtheorem{corollary}[theorem]{Corollary}
\newtheorem{definition}[theorem]{Definition}
\newtheorem{lemma}[theorem]{Lemma}
\newtheorem{proposition}[theorem]{Proposition}
\newenvironment{proof}[1][Proof]{\noindent{\textbf {#1}  }}  {\hfill$\Box$\bigskip}
\begin{document}

\title{Ordering uniform supertrees by their spectral radii}
\author{Xiying Yuan\thanks{Department of Mathematics, Shanghai University, Shanghai
200444, China; \textit{email: xiyingyuan2007@hotmail.com }} \thanks{Research
supported by National Science Foundation of China (No. 11101263), and by a
grant of "The First-class Discipline of Universities in Shanghai".}}
\maketitle

\begin{abstract}
A connected and acyclic hypergraph is called a supertree. In this paper we
mainly focus on the spectral radii of uniform supertrees. Li, Shao and Qi
determined the first two $k$-uniform supertrees with large spectral radii
among all the $k$-uniform supertrees on $n$ vertices [H. Li, J. Shao, L. Qi,
The extremal spectral radii of $k$-uniform supertrees, arXiv:1405.7257v1, May
2014]. By applying the operation of moving edges on hypergraphs and using the
weighted incidence matrix method we extend the above order to the fourth
$k$-uniform supertree.

\textbf{AMS classification: }\textit{15A42, 05C50}

\textbf{Keywords:}\textit{ uniform hypergraph, adjacency tensor, spectral
radius, uniform supertree, uniform hypertree.}

\end{abstract}

\section{Introduction}

Let $G$ be an ordinary graph, and $A(G)$ be its adjacency matrix. Denote by
$\rho(G)$ the spectral radius of graph $G,$ i.e., the largest eigenvalue of
$A(G).$ As usual, denote by $S_{n}$, $P_{n}$ the star on $n$ vertices, the
path on $n$ vertices, respectively.

We will take some notation from \cite{HuQiShao-power} and \cite{LiShaoQi}. We
denote the set $\{1,2,\cdot\cdot\cdot,n\}$ by $[n].$ Hypergraph is a natural
generalization of an ordinary graph (see \cite{Berge}). A hypergraph
$\mathcal{H}=(V(\mathcal{H}),E(\mathcal{H}))$ on $n$ vertices is a set of
vertices say $V(\mathcal{H})=\{1,2,\cdot\cdot\cdot,n\}$ and a set of edges,
say $E(\mathcal{H})=\{e_{1},e_{2},\cdot\cdot\cdot,e_{m}\},$ where
$e_{i}=\{i_{1},i_{2},\cdots,i_{l}\},i_{j}\in\lbrack n],$ $j=1,2,\cdots,l.$ If
$|e_{i}|=k$ for any $i=1,2,\cdot\cdot\cdot,m,$ then $\mathcal{H}$ is called
$k$-uniform hypergraph. A vertex $v$ is said to be incident to an edge $e$ if
$v\in e.$ The degree $d(i)$ of vertex $i$ is defined as $d(i)=|\{e_{j}:i\in
e_{j}\in E(\mathcal{H})\}|.$ A vertex of degree one is called a pendent
vertex. For a $k$-uniform hypergraph $\mathcal{H},$ an edge $e\in
E(\mathcal{H})$ is called a pendent edge if $e$ contains exactly $k-1$ pendent vertices.

An order $k$ dimension $n$ tensor $\mathcal{A=}(\mathcal{A}_{i_{1}i_{2}\cdots
i_{k}})\in\mathbb{C}^{n\times n\times\cdots\times n}$ is a multidimensional
array with $n^{k}$ entries, where $i_{j}\in\lbrack n]$ \ for each
$j=1,2,\cdot\cdot\cdot,k.$ To study the properties of uniform hypergraph by
algebraic methods, adjacency matrix of an ordinary graph is naturally
generalized to adjacency tenor (it is called adjacency hypermatrix in
\cite{Cooper}) of a hypergraph (see \cite{Cooper} \cite{Qi2014}).

\begin{definition}
Let $\mathcal{H}=(V(\mathcal{H}),E(\mathcal{H}))$ be a $k$-uniform hypergraph
on $n$ vertices. The adjacency tensor of $\mathcal{H}$ is defined as the
$k$-$th$ order $n$-dimensional tensor $\mathcal{A(H)}$ whose $(i_{1}\cdots
i_{k})$-entry is:
\[
\mathcal{A(H)}_{i_{1}i_{2}\cdots i_{k}}=%
\begin{cases}
\frac{1}{(k-1)!} & \text{$\{i_{1},i_{2},\cdots,i_{k}\}\in E(\mathcal{H})$}\\
0 & \text{otherwise}.
\end{cases}
\]

\end{definition}

The following general product of tensors, is defined in \cite{Shao} by Shao,
which is a generalization of the matrix case.

\begin{definition}
Let $\mathcal{A}$ and $\mathcal{B}$ be order $m\geq2$ and $k\geq1$ dimension
$n$ tensors, respectively. The product $\mathcal{AB}$ is the following tensor
$\mathcal{C}$ of order $(m-1)(k-1)+1$ and dimension $n$ with entries:
\begin{equation}
\mathcal{C}_{i\alpha_{1}\cdots\alpha_{m-1}}=\sum_{i_{2},\cdots,i_{m}\in\lbrack
n]}\mathcal{A}_{ii_{2}\cdots i_{m}}\mathcal{B}_{i_{2}\alpha_{1}}%
\cdots\mathcal{B}_{i_{m}\alpha_{m-1}}. \label{1}%
\end{equation}
Where $i\in\lbrack n],\alpha_{1},\cdots,\alpha_{m-1}\in\lbrack n]\times
\cdots\times\lbrack n]$.
\end{definition}

Let $\mathcal{A}$ be an order $k$ dimension $n$ tensor, let $x=(x_{1}%
,\cdot\cdot\cdot,x_{n})^{T}\in\mathbb{C}^{n}$ be a column vector of dimension
$n$. Then by (1) $\mathcal{A}x$ is a vector in $\mathbb{C}^{n}$ whose $i$th
component is as the following%

\[
(\mathcal{A}x)_{i}=\sum_{i_{2},\cdots,i_{k}=1}^{n}\mathcal{A}_{ii_{2}\cdots
i_{k}}x_{i_{2}}\cdots x_{i_{k}}.
\]

Let $x^{[k]}=(x_{1}^{k},\cdots,x_{n}^{k})^{T}$. Then (see \cite{ChangPZ}
\cite{Qi2014}) a number $\lambda\in\mathbb{C}$ is called an eigenvalue of the
tensor $\mathcal{A}$ if there exists a nonzero vector $x\in\mathbb{C}^{n}$
satisfying the following eigenequations%

\[
\mathcal{A}x=\lambda x^{[k-1]},
\]
and in this case, $x$ is called an eigenvector of $\mathcal{A}$ corresponding
to eigenvalue $\lambda$.

Let $\mathcal{A}$ be a $k$th-order $n$-dimensional nonnegative tensor. The
\textit{spectral radius} of $\mathcal{A}$ is defined as
\[
\rho(\mathcal{A})=\max\{|\lambda|:\lambda\text{ is an eigenvalue of
}\mathcal{A}\}.
\]

In this paper we call $\rho(\mathcal{A(H)})$ the spectral radius of uniform
hypergraph $\mathcal{H},$ denoted by $\rho(\mathcal{H}).$ For more details on
the eigenvalues of a uniform hypergraph one can refer to \cite{Cooper}
\cite{HuHuangLingQi} and \cite{Nikiforov}.

In \cite{FGH}, the weak irreducibility of nonnegative tensors was defined. It
was proved in \cite{FGH} and \cite{Yang} that a $k$-uniform hypergraph
$\mathcal{H}$ is connected if and only if its adjacency tensor $\mathcal{A(H)}%
$ is weakly irreducible.

\begin{theorem}
\label{Perron} \cite{ChangPZ} If $\mathcal{A}$ is a nonnegative tensor, then
$\rho(\mathcal{A})$ is an eigenvalue with a nonnegative eigenvector $x$
corresponding to it. If furthermore $\mathcal{A}$ is weakly irreducible, then
$x$ is positive, and for any eigenvalue $\lambda$ with nonnegative
eigenvector, $\lambda=\rho(\mathcal{A})$. Moreover, the nonnegative
eigenvector is unique up to a constant multiple.
\end{theorem}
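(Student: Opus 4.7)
The plan is to prove the existence of a nonnegative Perron eigenvector via a fixed-point argument, then identify the corresponding eigenvalue with $\rho(\mathcal{A})$ via a Collatz--Wielandt type characterization, and finally use weak irreducibility to upgrade nonnegativity to strict positivity and to obtain uniqueness. I would not rely on the linear-algebra proofs of classical Perron--Frobenius, because the eigenequation $\mathcal{A}x=\lambda x^{[k-1]}$ is nonlinear in $x$ when $k\ge 3$.

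First I would define the map $F:\mathbb{R}^{n}_{\ge 0}\setminus\{0\}\to\mathbb{R}^{n}_{\ge 0}$ by $F(x)_{i}=\bigl((\mathcal{A}x)_{i}\bigr)^{1/(k-1)}$, using the tensor--vector product of Definition~2. Since $F$ is continuous and positively homogeneous of degree one, I can pass to the normalized map $T(x)=F(x)/\sum_{j}F(x)_{j}$ on the standard simplex $\Delta=\{x\ge 0:\sum_{i}x_{i}=1\}$. Continuity on the boundary of $\Delta$ is the first subtlety: if $(\mathcal{A}x)_{i}\equiv 0$ for all $i$ at some boundary point, $T$ is ill-defined there. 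The standard remedy is to perturb $\mathcal{A}$ to $\mathcal{A}_{\varepsilon}=\mathcal{A}+\varepsilon\mathcal{J}$ where $\mathcal{J}$ has all entries $1/(k-1)!$, so the perturbed map $T_{\varepsilon}$ is a continuous self-map of $\Delta$. Brouwer's theorem then produces $x_{\varepsilon}\in\Delta$ with $T_{\varepsilon}(x_{\varepsilon})=x_{\varepsilon}$, which unwinds to $\mathcal{A}_{\varepsilon}x_{\varepsilon}=\lambda_{\varepsilon}x_{\varepsilon}^{[k-1]}$ for some $\lambda_{\varepsilon}\ge 0$. Extracting a convergent subsequence as $\varepsilon\to 0^{+}$ gives a nonnegative eigenpair $(\lambda_{0},x_{0})$ of $\mathcal{A}$.

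Second, to identify $\lambda_{0}$ with $\rho(\mathcal{A})$, I would establish the Collatz--Wielandt bound: for every complex eigenpair $(\lambda,y)$ the triangle inequality applied entrywise to $\mathcal{A}y=\lambda y^{[k-1]}$ gives $|\lambda|\,|y_{i}|^{k-1}\le (\mathcal{A}|y|)_{i}$ at every $i$, hence $|\lambda|\le \min_{i:|y_{i}|>0}(\mathcal{A}|y|)_{i}/|y_{i}|^{k-1}$. Combined with the obvious upper bound $\lambda_{0}\le\rho(\mathcal{A})$ and the fact that the Perron pair constructed above saturates this ratio, this pins down $\lambda_{0}=\rho(\mathcal{A})$ and in fact gives the variational formula
\[
\rho(\mathcal{A})=\max_{x\ge 0,\,x\ne 0}\ \min_{i:x_{i}>0}\frac{(\mathcal{A}x)_{i}}{x_{i}^{k-1}}.
\]

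Third, assume $\mathcal{A}$ is weakly irreducible. If some coordinate of $x_{0}$ vanished, then the support $S=\{i:x_{0,i}>0\}$ would be a proper subset of $[n]$, and writing out $(\mathcal{A}x_{0})_{i}=\rho(\mathcal{A})x_{0,i}^{k-1}=0$ for $i\notin S$ would mean that no edge of the associated directed graph leaves $S$; this directly contradicts the equivalence between weak irreducibility and strong connectivity of that digraph, so $x_{0}>0$. For any other nonnegative eigenvector $y$, the Collatz--Wielandt argument forces the eigenvalue to be $\rho(\mathcal{A})$, and I would then consider $t^{*}=\min_{i}y_{i}/x_{0,i}$ and $z=y-t^{*}x_{0}\ge 0$; a homogeneity and minimality argument, together with the positivity of $x_{0}$, shows $z\equiv 0$, yielding uniqueness up to scaling.

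The step I expect to be the main obstacle is the uniqueness part of the last paragraph. For matrices ($k=2$) it reduces to a rank-one argument on the Perron eigenspace, but because the tensor eigenequation is nonlinear in $x$, the difference $y-t^{*}x_{0}$ is \emph{not} automatically an eigenvector, so the standard matrix trick fails. To close this gap one typically appeals to a nonlinear comparison principle---either through the Hilbert projective metric, showing $T$ is a contraction in that metric under weak irreducibility, or through a careful componentwise induction exploiting the directed-graph connectivity---and this is the technically heaviest ingredient of the Chang--Pearson--Zhang proof.
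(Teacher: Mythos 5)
The paper offers no proof of Theorem~\ref{Perron}: it is imported as a black box from Chang--Pearson--Zhang \cite{ChangPZ}, with the weakly irreducible refinement coming from \cite{FGH} and \cite{Yang}. So there is no internal argument to compare yours against, and I can only judge the sketch on its own. Your overall roadmap (Brouwer fixed point for a positive perturbation $\mathcal{A}_{\varepsilon}$, a limit as $\varepsilon\to 0^{+}$, a Collatz--Wielandt bound to identify the eigenvalue, then connectivity for positivity and a nonlinear comparison principle for uniqueness) is indeed the standard route in the literature, and you are right that the matrix trick with $y-t^{*}x_{0}$ breaks down and that uniqueness is the heavy step.

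There is, however, a genuine gap in your positivity argument, and it sits exactly where the hypothesis ``weakly irreducible'' differs from Chang--Pearson--Zhang's stronger notion of irreducibility. From $(\mathcal{A}x_{0})_{i}=0$ for $i\notin S$ you may only conclude that every nonzero entry $\mathcal{A}_{ii_{2}\cdots i_{k}}$ with $i\notin S$ has \emph{at least one} index $i_{j}\notin S$; you may not conclude that ``no edge of the associated directed graph leaves $S$,'' because an arc $i\to j$ in that digraph only certifies a nonzero entry with $j$ somewhere among $i_{2},\dots,i_{k}$, and the monomial $x_{0,i_{2}}\cdots x_{0,i_{k}}$ can still vanish through a different index. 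Strong connectivity therefore yields no direct contradiction. The gap is not cosmetic: for the $3$-uniform hypergraph consisting of the single edge $\{1,2,3\}$ (connected, hence with weakly irreducible adjacency tensor and $\rho=1$), the vector $x=(0,1,0)^{T}$ satisfies $\mathcal{A}x=0=0\cdot x^{[2]}$, a nonnegative eigenvector with eigenvalue $0\neq\rho$. Thus under mere weak irreducibility the support argument cannot work, and in fact the assertion that \emph{every} nonnegative eigenvector is positive with eigenvalue $\rho$ holds only under the stronger irreducibility of \cite{ChangPZ}; the weakly irreducible case (existence and uniqueness of the \emph{positive} eigenvector associated with $\rho$) is proved in \cite{FGH} via the nonlinear Perron--Frobenius theory of order-preserving homogeneous maps on the open cone --- machinery you invoke only for uniqueness but actually need already for positivity. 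Two smaller deferrals: the claim that your constructed pair ``saturates'' the max--min ratio is asserted rather than proved, and the uniqueness step is acknowledged but not carried out. None of this affects the paper, which only ever applies the theorem to connected uniform hypergraphs and uses it as a citation.
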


By Theorem \ref{Perron}, for a $k$th-order weakly irreducible nonnegative
tensor $\mathcal{A}$, it has a unique positive eigenvector $x$ corresponding
to $\rho(\mathcal{A})$ with $||x||_{k}=1$ and it is called the principal
eigenvector of $\mathcal{A}$ (\cite{LiShaoQi}).

\begin{definition}
\cite{LiShaoQi} \label{supertree}A supertree is a hypergraph which is both
connected and acyclic.
\end{definition}

A characterization of acyclic hypergraph has been given in Berge's textbook
\cite{Berge}, and we just state a version for uniform hypergraphs.

\begin{proposition}
\cite{Berge} If $\mathcal{H}$ is a connected $k$-uniform hypergraph with $n$
vertices and $m$ edges, then it is acyclic if and only if $m(k-1)=n-1.$
\end{proposition}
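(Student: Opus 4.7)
The plan is to pass from the hypergraph $\mathcal{H}$ to its \emph{incidence bipartite graph} (König representation) $B(\mathcal{H})$, whose vertex set is $V(\mathcal{H})\cup E(\mathcal{H})$ and whose edges are the pairs $\{v,e\}$ with $v\in e$. This reduces the combinatorial question about hypergraphs to the familiar tree/cycle count for ordinary graphs.

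First I would verify the two translation lemmas that justify this reduction. Walks in $\mathcal{H}$ correspond bijectively to walks in $B(\mathcal{H})$ that alternate between vertex-nodes and edge-nodes, so $\mathcal{H}$ is connected if and only if $B(\mathcal{H})$ is connected. Similarly, a Berge cycle $v_{1},e_{1},v_{2},e_{2},\dots,v_{\ell},e_{\ell},v_{1}$ in $\mathcal{H}$ corresponds to an (ordinary graph) cycle of length $2\ell$ in $B(\mathcal{H})$, and conversely any cycle in $B(\mathcal{H})$ must have even length and gives back a Berge cycle. Hence $\mathcal{H}$ is acyclic if and only if $B(\mathcal{H})$ is a forest.

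Next I would do the edge count. The bipartite graph $B(\mathcal{H})$ has exactly $n+m$ vertices, and since $\mathcal{H}$ is $k$-uniform, every $e\in E(\mathcal{H})$ contributes exactly $k$ incidences, so $|E(B(\mathcal{H}))|=mk$. Assuming $\mathcal{H}$ is connected, $B(\mathcal{H})$ is a connected graph on $n+m$ vertices and therefore has at least $n+m-1$ edges, with equality if and only if $B(\mathcal{H})$ is a tree. Equating $mk\ge n+m-1$ and rearranging gives $m(k-1)\ge n-1$, with equality if and only if $B(\mathcal{H})$ is acyclic, which by the previous paragraph is equivalent to $\mathcal{H}$ being acyclic. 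This establishes both implications simultaneously.

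The only subtle point, and the main thing worth being careful about, is that the equivalence between Berge cycles in $\mathcal{H}$ and ordinary cycles in $B(\mathcal{H})$ uses the fact that the vertices $v_i$ in a Berge cycle are distinct and the edges $e_i$ are distinct; I would state this explicitly so that the translation is clean. Once that is in hand, the argument is essentially the standard ``connected graph is a tree iff it has one fewer edge than vertices'' fact applied to $B(\mathcal{H})$, and no induction on $m$ or pendant-edge extraction is needed.
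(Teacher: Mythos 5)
The paper gives no proof of this proposition; it is quoted directly from Berge's textbook as a known characterization, so there is nothing in the source to compare your argument against. That said, your proof is correct and complete as written. The reduction to the incidence bipartite graph $B(\mathcal{H})$ is the standard route: connectivity and Berge-acyclicity of $\mathcal{H}$ transfer to connectivity and acyclicity of $B(\mathcal{H})$ (the point you rightly flag --- a Berge cycle has distinct vertices $v_i$ and distinct edges $e_i$, so it yields a genuine cycle of length $2\ell\geq 4$ in the bipartite graph, and conversely every cycle of $B(\mathcal{H})$ alternates and hence decodes to a Berge cycle), and then the count $|E(B(\mathcal{H}))|=mk$ against $|V(B(\mathcal{H}))|=n+m$ together with the fact that a connected graph on $N$ vertices has at least $N-1$ edges, with equality exactly for trees, gives $m(k-1)\geq n-1$ with equality precisely in the acyclic case. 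One cosmetic remark: ``equating $mk\ge n+m-1$'' should read ``combining $|E(B(\mathcal{H}))|=mk$ with the inequality $|E(B(\mathcal{H}))|\geq n+m-1$,'' but the logic is sound and no further steps are missing.
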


The concept of \textit{power hypergraphs} was introduced in
\cite{HuQiShao-power}. Let $G=(V,E)$ be an ordinary graph. For every $k\geq3$,
the $k$th power of $G$, $G^{k}:=(V^{k},E^{k})$ is defined as the $k$-uniform
hypergraph with the edge set
\[
E^{k}:=\{e\cup\{i_{e,1},\cdot\cdot\cdot,i_{e,k-2}\}\text{ }|\text{ }e\in E\}
\]
and the vertex set
\[
V^{k}:=V\cup(\cup_{e\in E}\{i_{e,1},\cdot\cdot\cdot,i_{e,k-2}\}).
\]
The $k$th power of an ordinary tree was called a $k$\textit{-uniform
hypertree} (\cite{HuQiShao-power} \cite{LiShaoQi}). The following observations
are clear. Any $k$-uniform hypertree\textit{ }is a supertree. A $k$-uniform
supertree $\mathcal{T}$ with at least two edges is a $k$-uniform hypertree if
and only if each edge of $\mathcal{T}$ contains at most two non-pendent vertices.

The $k$th power of $S_{n},$ denoted by $S_{n}^{k},$ is called
\textit{hyperstar} in \cite{HuQiShao-power}. Let $S(a,b)$ be the tree on
$a+b+2$ vertices obtained from an edge $e$ by attaching $a$ pendent edges to
one end vertex of $e$, and attaching $b$ pendent edges to the other end vertex
of $e$. Let $S^{k}(a,b)$ be the $k$th power of $S(a,b$).

In \cite{LiShaoQi}, it was proved that the hyperstar $S_{n^{\prime}}^{k}$
attains uniquely the maximum spectral radius among all $k$-uniform supertrees
on $n$ vertices, and $S^{k}(1,n^{\prime}-3)$ attains uniquely the second
largest spectral radius among all $k$-uniform supertrees on $n$ vertices
(where $n^{\prime}=$ $\frac{n-1}{k-1}+1$).

Suppose that $m=\frac{n-1}{k-1},$ now we introduce a special class of
supertrees with $m$ edges, which are not hypertrees. Let $1\leq t_{1}\leq
t_{2}\leq t_{3}$ be three integers such that $t_{1}+t_{2}+t_{3}=m-1.$ Denote
by $\mathcal{T(}t_{1},t_{2},t_{3}\mathcal{)}$ the $k$-uniform supertree
containing exactly three non-pendent vertices, say $u_{1},u_{2},u_{3},$
incident to one edge, and $d(u_{i})=t_{i}+1$ holding for each $i=1,2,3$.

In this paper, we will determine the third and the fourth $k$-uniform
supertree with the large spectral radii among all $k$-uniform supertrees on
$n$ vertices.

\begin{theorem}
\label{Main 1}Let $\mathcal{T}$ be a $k$-uniform supertree on $n$ vertices
(with $m=$ $n^{\prime}-1$ edges, where $n^{\prime}=\frac{n-1}{k-1}+1\geq5$).
Suppose that $\mathcal{T\notin}\{S_{n^{\prime}}^{k},S^{k}(1,n^{\prime}-3)\}.$
Then we have
\[
\rho(\mathcal{T})\leq\rho(S^{k}(2,n^{\prime}-4)),
\]
with equality holding if and if $\mathcal{T\cong}S^{k}(2,n^{\prime}-4).$
\end{theorem}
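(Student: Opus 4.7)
\medskip

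\noindent\textbf{Proof proposal.} The plan is to reduce an arbitrary $\mathcal{T}\notin\{S_{n^{\prime}}^{k},S^{k}(1,n^{\prime}-3)\}$ to the target $S^{k}(2,n^{\prime}-4)$ by a sequence of spectral-radius-increasing local modifications, and then to beat the one remaining competitor by a direct weighted-incidence-matrix comparison. First I would split into two cases according to whether $\mathcal{T}$ is a hypertree (the $k$th power of an ordinary tree) or not, since the class $\mathcal{T}(t_{1},t_{2},t_{3})$ introduced just before the theorem is exactly the ``minimal'' non-hypertree family that is visible in the excerpt.

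\textbf{Case 1: $\mathcal{T}$ is a $k$-uniform hypertree, so $\mathcal{T}=T^{k}$ for some ordinary tree $T$ on $n^{\prime}$ vertices.} By hypothesis $T\not\in\{S_{n^{\prime}},S(1,n^{\prime}-3)\}$. I would apply the edge-moving lemma (proved earlier in the paper using the weighted incidence matrix method, and which I assume as granted) repeatedly to push pendent edges toward the vertex of largest degree. Under this operation the spectral radius strictly increases unless the moved edges were already pendent at a neighbour of a high-degree vertex. The claim is that any such $T$ can be transformed, with strict increase of $\rho$ at some step, into $S(2,n^{\prime}-4)$; the only obstruction is a configuration of the form $S(a,b)$ with $a\geq 2$, where further moving from the smaller side to the larger side still increases $\rho$ until $a=2$. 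Hence $\rho(\mathcal{T})\leq\rho(S^{k}(2,n^{\prime}-4))$ with equality iff $\mathcal{T}\cong S^{k}(2,n^{\prime}-4)$.

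\textbf{Case 2: $\mathcal{T}$ is not a hypertree.} Then, by the characterisation recalled in the introduction, some edge $e_{0}$ of $\mathcal{T}$ contains at least three non-pendent vertices. Using the same edge-moving lemma I would first migrate every pendent subtree to be attached at one of the non-pendent vertices lying on $e_{0}$, thereby reducing $\mathcal{T}$ (with spectral radius non-decreasing, strictly increasing unless $\mathcal{T}$ was already of this form) to some $\mathcal{T}(t_{1},t_{2},t_{3})$ with $1\leq t_{1}\leq t_{2}\leq t_{3}$ and $t_{1}+t_{2}+t_{3}=m-1$. A further balancing argument (again a shift of a pendent edge from $u_{1}$ or $u_{2}$ to $u_{3}$ along $e_{0}$) shows that $\rho(\mathcal{T}(t_{1},t_{2},t_{3}))\leq\rho(\mathcal{T}(1,1,m-3))$, so it suffices to prove
\[
\rho(\mathcal{T}(1,1,m-3))<\rho(S^{k}(2,n^{\prime}-4)).
\]

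\textbf{The key step and main obstacle} is precisely this last strict inequality, because the two supertrees are not related by a single edge-moving move: $\mathcal{T}(1,1,m-3)$ has three non-pendent vertices on a single edge, while $S^{k}(2,n^{\prime}-4)$ has two non-pendent vertices on two different edges. I would handle it by constructing, for $S^{k}(2,n^{\prime}-4)$, a consistent weighted incidence matrix $B$ with row sums equal to the Perron value $\rho(S^{k}(2,n^{\prime}-4))^{1/k}$ (this is the standard criterion characterising $\rho$ via weighted incidence, as used in \cite{LiShaoQi}), and then restricting/relabelling $B$ to produce a weighted incidence matrix on $\mathcal{T}(1,1,m-3)$ whose row sums are all strictly less than $\rho(S^{k}(2,n^{\prime}-4))^{1/k}$; the corresponding monotonicity criterion then yields the desired strict inequality. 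The computational core reduces to verifying a polynomial inequality between the characteristic-type equations of $S^{k}(2,n^{\prime}-4)$ and $\mathcal{T}(1,1,m-3)$, which is finite in $k$ and $n^{\prime}$ and can be checked by examining the dominant coefficients. Combining Cases~1 and~2 gives the theorem.
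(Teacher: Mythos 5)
Your Case 2 and your ``key step'' essentially coincide with the paper's argument: the reduction of a non-hypertree supertree to $\mathcal{T}(t_{1},t_{2},t_{3})$ and then to $\mathcal{T}(1,1,m-3)$ by edge-moving is the paper's Lemma \ref{induction} and Lemma \ref{T(a,b,c)}, and the strict inequality $\rho(\mathcal{T}(1,1,m-3))<\rho(S^{k}(2,n^{\prime}-4))$ is exactly part (1) of Lemma \ref{T(1,,1,)}, proved by exhibiting a weighted incidence matrix that makes $\mathcal{T}(1,1,m-3)$ strictly $\alpha$-subnormal with $\alpha=\rho(S(2,n^{\prime}-4))^{-2}$ (your description in terms of ``row sums equal to the Perron value'' is not quite the $\alpha$-normal formalism of Definitions \ref{subnormal}--\ref{supernormal}, but the computation you anticipate --- a polynomial identity driven by $\rho^{4}-m\rho^{2}+2(m-3)=0$ and $\rho^{2}>m-2$ --- is precisely what the paper carries out).

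The genuine gap is in your Case 1. You claim that every ordinary tree $T\notin\{S_{n^{\prime}},S(1,n^{\prime}-3),S(2,n^{\prime}-4)\}$ can be transformed into $S(2,n^{\prime}-4)$ by a sequence of edge moves each of which does not decrease $\rho$, with a strict increase somewhere. But Theorem \ref{moving edge} only lets you move edges \emph{toward} a vertex with larger Perron weight, and such moves tend to overshoot the target: for instance, for a tree with three non-pendent vertices $u\mbox{--}v\mbox{--}w$ in which the middle vertex $v$ has degree $2$ (a double broom), moving the pendent edges at $u$ onto $w$ produces $S(1,n^{\prime}-3)$, not $S(2,n^{\prime}-4)$. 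That shows $\rho(T)<\rho(S(1,n^{\prime}-3))$, which is weaker than what the theorem asserts, and it is not clear how to reroute the increasing path through $S(2,n^{\prime}-4)$ for every such $T$. This is exactly the difficulty the paper avoids: for the hypertree case it does not use edge-moving at all, but instead quotes the classical ordering of ordinary trees by spectral radius (Hofmeister, Theorem \ref{Hofm}: $\rho(S_{n})>\rho(S(1,n-3))>\rho(S(2,n-4))>\rho(F_{n})>\rho(T)$ for all other trees $T$) together with the power identity $\rho(T^{k})=\rho(T)^{2/k}$ of Theorem \ref{Bu}, which immediately gives $\rho(T^{k})<\rho(S^{k}(2,n^{\prime}-4))$ for every ordinary tree $T$ with at least three non-pendent vertices. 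Unless you can supply a genuinely new combinatorial argument that controls where the edge-moving process lands, you should replace your Case 1 with this citation-based route (or reprove Hofmeister's ordering, which is a nontrivial task in its own right).
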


\begin{theorem}
\label{Main 2}Let $\mathcal{T}$ be a $k$-uniform supertree on $n$ vertices
(with $m=$ $n^{\prime}-1$ edges, where $n^{\prime}=$ $\frac{n-1}{k-1}+1\geq
5$). Suppose that $\mathcal{T\notin}\{S_{n^{\prime}}^{k},S^{k}(1,n^{\prime
}-3),S^{k}(2,n^{\prime}-4)\}.$ Then we have
\[
\rho(\mathcal{T})\leq\rho(\mathcal{T(}1,1,m-3\mathcal{)}),
\]
with equality holding if and if $\mathcal{T\cong T(}1,1,m-3\mathcal{)}.$
\end{theorem}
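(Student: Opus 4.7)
The plan is to identify $\mathcal{T}(1,1,m-3)$ as the unique $\rho$-maximizer among $k$-uniform supertrees on $n$ vertices lying outside $\{S_{n^{\prime}}^{k},S^{k}(1,n^{\prime}-3),S^{k}(2,n^{\prime}-4)\}$ via a two-stage argument: first, edge-moving operations (of the sort used in the proof of Theorem~\ref{Main 1}) reduce an arbitrary such $\mathcal{T}$ to a short list of canonical candidates, then the weighted incidence matrix method settles the remaining comparison. The workhorse is the standard edge-moving lemma: if $x$ is the principal eigenvector of $\mathcal{A}(\mathcal{T})$ and $u,v\in V(\mathcal{T})$ satisfy $x_{v}\ge x_{u}$, then detaching a pendent edge at $u$ and reattaching it so that $v$ now plays the role $u$ previously played weakly increases $\rho$, strictly so whenever $x_{v}>x_{u}$.

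Applying this lemma I would argue that any $\mathcal{T}$ outside the excluded set is dominated, after finitely many strictly $\rho$-increasing moves, by one of two canonical forms: (a) an $S^{k}(a,b)$ with $a\ge 3$, reached when all non-pendent vertices sit in a single edge but only two of them remain; or (b) a triple-vertex supertree $\mathcal{T}(t_{1},t_{2},t_{3})$ with $1\le t_{1}\le t_{2}\le t_{3}$ and $t_{1}+t_{2}+t_{3}=m-1$. Indeed, any hypertree whose non-pendent vertices do not all lie in a common edge can be collapsed (by moves toward the maximum-eigenvector vertex) onto a single edge and hence into the $S^{k}(a,b)$ form, and any supertree with four or more non-pendent vertices sharing a common edge can be reduced to the three-vertex case by rendering one such vertex pendent and rebalancing. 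Within family~(b), the Perron equations restricted to the central edge give $x_{u_{1}}\le x_{u_{2}}\le x_{u_{3}}$ whenever $t_{1}\le t_{2}\le t_{3}$, so the moves $(t_{1},t_{2},t_{3})\mapsto(t_{1}-1,t_{2},t_{3}+1)$ (permissible while $t_{1}\ge 2$) and $(t_{1},t_{2},t_{3})\mapsto(t_{1},t_{2}-1,t_{3}+1)$ (permissible while $t_{2}\ge 2$) strictly increase $\rho$ and terminate uniquely at $\mathcal{T}(1,1,m-3)$. Within family~(a), the known monotonicity of $\rho(S^{k}(a,b))$ in $a$ under $a+b=m-1$ identifies $S^{k}(3,n^{\prime}-5)$ as the maximum under the constraint $a\ge 3$.

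The whole proof then reduces to the single inequality $\rho(S^{k}(3,n^{\prime}-5))<\rho(\mathcal{T}(1,1,m-3))$, which I expect to be the main obstacle. My plan is to invoke the weighted incidence matrix method: set $\rho_{0}:=\rho(\mathcal{T}(1,1,m-3))$ and solve the Perron system of $\mathcal{T}(1,1,m-3)$ (whose heavy pendent-branch symmetry reduces it to a handful of unknowns for $x_{u_{1}}, x_{u_{2}}, x_{u_{3}}$ and the two pendent-vertex values) to obtain a short characteristic equation of which $\rho_{0}$ is the largest positive root; then, guided by that equation, construct on $S^{k}(3,n^{\prime}-5)$ a consistent weighted incidence matrix $B$ satisfying the two standard inequalities (edge-product $\ge \rho_{0}^{-k}$ at every edge, vertex row-sum $\le 1$ at every vertex) with strictness at at least one edge or vertex, thereby forcing $\rho(S^{k}(3,n^{\prime}-5))<\rho_{0}$. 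The technical difficulty is that $S^{k}(3,n^{\prime}-5)$ and $\mathcal{T}(1,1,m-3)$ differ only by a ``splitting'' of one high-degree non-pendent vertex into two adjacent non-pendent vertices, so the two spectral radii are close for large $n^{\prime}$ and the weights at the central edges must be tuned asymmetrically across $u_{1},u_{2},u_{3}$ for the resulting univariate polynomial inequality to land with the right sign. Equality analysis is then immediate: every edge-move in the reductions strictly increases $\rho$ unless $\mathcal{T}$ is already isomorphic to $\mathcal{T}(1,1,m-3)$, and the weighted incidence inequality is strict, so $\rho(\mathcal{T})=\rho(\mathcal{T}(1,1,m-3))$ forces $\mathcal{T}\cong\mathcal{T}(1,1,m-3)$.
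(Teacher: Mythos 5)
Your two-stage architecture (edge-moving to canonical forms, then a weighted-incidence-matrix comparison) draws on the same toolkit as the paper, and your handling of the case $N_{2}(\mathcal{T})=2$ and of the family $\mathcal{T}(t_{1},t_{2},t_{3})$ is essentially the paper's. But there is a genuine gap in your reduction: you claim that any supertree whose non-pendent vertices do not all lie in a common edge can be ``collapsed onto a single edge'' by $\rho$-increasing moves and hence dominated by some $S^{k}(a,b)$ with $a\geq3$. This fails in exactly the way that matters. Each move of Theorem~\ref{moving edge} strictly \emph{increases} the spectral radius, so the endpoint of the collapse is only an upper bound for $\rho(\mathcal{T})$, and for the hardest competitors that endpoint is $S^{k}(1,n^{\prime}-3)$ or $S^{k}(2,n^{\prime}-4)$, whose spectral radii exceed $\rho(\mathcal{T}(1,1,m-3))$. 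For instance, with $n^{\prime}=5$, collapsing $P_{5}^{k}$ toward its central vertex yields $S^{k}(1,n^{\prime}-3)$, so all you obtain is $\rho(P_{5}^{k})<\rho(S^{k}(1,n^{\prime}-3))$, which says nothing about the needed inequality $\rho(P_{5}^{k})\leq\rho(\mathcal{T}(1,1,1))$. There is no way to force such a collapse to land on an $S^{k}(a,b)$ with $a\geq3$: the class of supertrees that are $k$-th powers of ordinary trees with at least three non-pendent vertices cannot be handled by edge-moving alone.

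The paper closes precisely this case by a different route: since $\rho(T^{k})=\rho(T)^{2/k}$ (Theorem~\ref{Bu}), Hofmeister's ordering of ordinary trees (Theorem~\ref{Hofm}, Corollary~\ref{Ordering Tk}) bounds every such $\mathcal{T}=T^{k}$ by $\rho(F_{n^{\prime}}^{k})$, where $F_{n^{\prime}}$ is the coalescence of the center of $S_{n^{\prime}-4}$ with the center of $P_{5}$; the remaining comparison $\rho(F_{n^{\prime}}^{k})<\rho(\mathcal{T}(1,1,m-3))$ is proved by exhibiting a weighted incidence matrix that makes $\mathcal{T}(1,1,m-3)$ strictly and consistently $\alpha$-supernormal with $\alpha=\rho(F_{n^{\prime}})^{-2}$ (Lemma~\ref{T(1,,1,)}). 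Your proposal never mentions $F_{n^{\prime}}$, yet it is the extremal object in the case you omit, and the same comparison is also what the paper uses to finish your case $N_{2}(\mathcal{T})=2$ via $\rho(S^{k}(3,n^{\prime}-5))<\rho(F_{n^{\prime}}^{k})$. Two smaller points: your monotonicity claim $x_{u_{1}}\leq x_{u_{2}}\leq x_{u_{3}}$ in $\mathcal{T}(t_{1},t_{2},t_{3})$ is asserted without proof and is unnecessary --- the paper simply lets $u_{3}$ be the vertex with the largest eigenvector entry and moves all surplus pendent edges to it in a single application of Theorem~\ref{moving edge}; and your plan to compare $S^{k}(3,n^{\prime}-5)$ directly against the implicitly defined root $\rho_{0}=\rho(\mathcal{T}(1,1,m-3))$ is substantially harder to execute than the paper's device of only ever taking $\alpha$ to be the reciprocal square of the explicitly characterized quantities $\rho(S(2,n^{\prime}-4))$ and $\rho(F_{n^{\prime}})$ and sandwiching $\mathcal{T}(1,1,m-3)$ between them.
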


The operation of moving edges on hypergraphs introduced by Li, Shao and Qi
(\cite{LiShaoQi}) and the weighted incidence matrix method introduced by Lu
and Man (\cite{LuMan}) are crucial for our proofs. In Section 2 we will show
them and other useful tools. In Section 3, we will give the proofs of our main results.

\section{Several tools to compare spectral radii}

A novel method (we call it weighted incidence matrix method) for computing (or
comparing) the spectral radii of hypergraphs was raised by Lu and Man.

\begin{definition}
\label{weighted incidence matrix} \cite{LuMan} A weighted incidence matrix $B$
of a hypergraph $\mathcal{H}=(V,E)$ is a $|V|\times|E|$ matrix such that for
any vertex $v$ and any edge $e$, the entry $B(v,e)>0$ if $v\in e$ and
$B(v,e)=0$ if $v\notin e$.
\end{definition}

\begin{definition}
\cite{LuMan} A hypergraph $\mathcal{H}$ is called $\alpha$-normal if there
exists a weighted incidence matrix $B$ satisfying

(1). $%
{\displaystyle\sum\limits_{e:v\in e}}
$ $B(v,e)=1$, for any $v\in V(\mathcal{H})$.

(2). $%
{\displaystyle\prod\limits_{v:v\in e}}
$ $B(v,e)=\alpha$, for any $e\in E(\mathcal{H})$.

Moreover, the weighted incidence matrix $B$ is called consistent if for any
cycle $v_{0}e_{1}v_{1}e_{2}\cdot\cdot\cdot v_{l}(v_{l}=v_{0})$%
\[%
{\displaystyle\prod\limits_{i=1}^{l}}
\frac{B(v_{i},e_{i})}{B(v_{i-1},e_{i})}=1.
\]

\end{definition}

\begin{definition}
\label{subnormal} \cite{LuMan} A hypergraph $\mathcal{H}$ is called $\alpha
$-subnormal if there exists a weighted incidence matrix $B$ satisfying

(1). $%
{\displaystyle\sum\limits_{e:v\in e}}
$ $B(v,e)\leq1$, for any $v\in V(\mathcal{H})$.

(2). $%
{\displaystyle\prod\limits_{v:v\in e}}
$ $B(v,e)\geq\alpha$, for any $e\in E(\mathcal{H})$.

Moreover, $\mathcal{H}$ is called strictly $\alpha$-subnormal if it is
$\alpha$-subnormal but not $\alpha$-normal.
\end{definition}

\begin{definition}
\label{supernormal}\cite{LuMan} A hypergraph $\mathcal{H}$ is called $\alpha
$-supernormal if there exists a weighted incidence matrix $B$ satisfying

(1). $%
{\displaystyle\sum\limits_{e:v\in e}}
$ $B(v,e)\geq1$, for any $v\in V(\mathcal{H})$.

(2). $%
{\displaystyle\prod\limits_{v:v\in e}}
$ $B(v,e)\leq\alpha$, for any $e\in E(\mathcal{H})$.

Moreover, $\mathcal{H}$ is called strictly $\alpha$-supernormal if it is
$\alpha$-supernormal but not $\alpha$-normal.
\end{definition}

For a fixed $k$-uniform hypergraph $\mathcal{H}$, $\rho(\mathcal{H})$ defined
here times constant factor $(k-1)!$ is the value of $\rho(\mathcal{H})$
defined in \cite{LuMan}$.$ While this is not essential. Remembering this
difference we modify Lemma 3 and Lemma 4 of \cite{LuMan} as the following
Theorem \ref{LuMan}.

\begin{theorem}
\cite{LuMan} \label{LuMan} Let $\mathcal{H}$ be a $k$-uniform hypergraph.

(1). If $\mathcal{H}$ is strictly $\alpha$-subnormal, then we have
$\rho(\mathcal{H})<\alpha^{-\frac{1}{k}}.$

(2). If $\mathcal{H}$ is strictly and consistently $\alpha$-supernormal, then
$\rho(\mathcal{H})>\alpha^{-\frac{1}{k}}.$
\end{theorem}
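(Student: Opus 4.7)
My plan is to tie the weighted incidence matrix $B$ directly to the eigenequation $\mathcal{A}(\mathcal{H})\,x=\rho(\mathcal{H})\,x^{[k-1]}$ through the AM--GM inequality, which mediates between the row-sum condition (1) and the edge-product condition (2). In part (1) the positive test vector $x$ is the principal eigenvector supplied by Theorem~\ref{Perron}; in part (2) it has to be manufactured from $B$ itself, and that is where the consistency hypothesis enters.

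\emph{Part (1).} Let $x>0$ be the principal eigenvector of $\mathcal{A}(\mathcal{H})$. Multiplying the eigenequation by $x_v$ and summing yields the identity $\rho(\mathcal{H})\sum_{v} x_v^{k}=k\sum_{e}\prod_{u\in e} x_u$. Multiplying condition (1) of $\alpha$-subnormality by $x_v^{k}$, summing over $v$, and reindexing by edge gives $\sum_{v} x_v^{k}\geq\sum_{e}\sum_{v\in e}B(v,e)x_v^{k}$. On each edge, AM--GM combined with condition (2) produces
\[
\sum_{v\in e}B(v,e)x_v^{k}\;\geq\;k\Bigl(\prod_{v\in e}B(v,e)x_v^{k}\Bigr)^{1/k}\;\geq\;k\alpha^{1/k}\prod_{v\in e}x_v.
\]
Chaining these with the eigenvector identity gives $\rho(\mathcal{H})\leq\alpha^{-1/k}$. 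If $\mathcal{H}$ is strictly $\alpha$-subnormal, then either some $v$ has $\sum_{e\ni v}B(v,e)<1$ (making the first step strict because $x_v>0$) or some $e$ has $\prod_{v\in e}B(v,e)>\alpha$ (making AM--GM strict on that edge); in either case the bound sharpens to $\rho(\mathcal{H})<\alpha^{-1/k}$.

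\emph{Part (2).} Fix a root $v_0$ and any positive value $x_{v_0}$, and along a walk $v_0,e_1,v_1,\ldots,e_\ell,v_\ell=v$ declare
\[
x_v^{k}\;:=\;x_{v_0}^{k}\prod_{i=1}^{\ell}\frac{B(v_{i-1},e_i)}{B(v_i,e_i)}.
\]
The consistency condition on $B$ is precisely the requirement that this definition be independent of the walk, so $x>0$ is well-defined, and $c_e:=B(v,e)\,x_v^{k}$ depends only on $e$ (not on the choice of $v\in e$). Conditions (1)--(2) of $\alpha$-supernormality then rewrite as $\sum_{e\ni v}c_e\geq x_v^{k}$ and $c_e\leq\alpha^{1/k}\prod_{u\in e}x_u$, so
\[
x_v^{k}\;\leq\;\sum_{e\ni v}c_e\;\leq\;\alpha^{1/k}\sum_{e\ni v}\prod_{u\in e}x_u\;=\;\alpha^{1/k}\,x_v\,(\mathcal{A}(\mathcal{H})\,x)_v.
\]
Dividing by $x_v$ gives $(\mathcal{A}(\mathcal{H})x)_v\geq\alpha^{-1/k}\,x_v^{k-1}$ for every $v$. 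Multiplying by $x_v$, summing, and invoking the Rayleigh-type upper bound $k\sum_e\prod_{u\in e}y_u\leq\rho(\mathcal{H})\sum_v y_v^{k}$ valid for every $y\geq 0$ (the variational characterization of $\rho$ for a symmetric nonnegative tensor) yields $\rho(\mathcal{H})\geq\alpha^{-1/k}$. Strict supernormality forces a strict inequality at some vertex or edge; positivity of $x$ and of each $c_e$ lets that strictness survive the two summations, and the variational step then upgrades the bound to $\rho(\mathcal{H})>\alpha^{-1/k}$.

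\emph{Main obstacle.} The genuine work is in part (2). First, one must check carefully that the path product defining $x_v^{k}$ is independent of the chosen walk: this reduces to showing that consistency on cycles makes the product of ratios $B(v_{i-1},e_i)/B(v_i,e_i)$ equal to $1$ around any closed walk, so that any two walks from $v_0$ to $v$ yield the same value. Second, the propagation of strictness is delicate: one needs to be sure that a single strict inequality in either (1) at a vertex or (2) at an edge really does yield $\rho(\mathcal{H})>\alpha^{-1/k}$, not merely $\geq$. Strict positivity of $x$ and of every $c_e$ is what makes this bookkeeping go through at both the summation step and the final variational comparison.
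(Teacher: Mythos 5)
This theorem is quoted by the paper from Lu and Man (Lemmas 3 and 4 of \cite{LuMan}) without any proof, so there is no internal argument to compare against; your proposal is essentially a reconstruction of the Lu--Man argument itself, and it is correct in outline. The computations check out: $(\mathcal{A}(\mathcal{H})x)_v=\sum_{e\ni v}\prod_{u\in e\setminus\{v\}}x_u$, the pairing identity $\rho\sum_v x_v^k=k\sum_e\prod_{u\in e}x_u$, the AM--GM step in part (1), and in part (2) the well-definedness of $x$ and of $c_e=B(v,e)x_v^k$ from consistency together with $c_e^k=\bigl(\prod_{v\in e}B(v,e)\bigr)\bigl(\prod_{v\in e}x_v\bigr)^k\le\alpha\bigl(\prod_{v\in e}x_v\bigr)^k$. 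Two points need tightening. First, both strictness arguments require $x>0$ on the relevant vertices: in part (1) the principal eigenvector of Theorem \ref{Perron} is guaranteed positive only when $\mathcal{A}(\mathcal{H})$ is weakly irreducible, i.e.\ when $\mathcal{H}$ is connected, and in part (2) your walk-based construction of $x$ only reaches the component of $v_0$; so connectedness must be assumed (it is implicit in \cite{LuMan} and harmless here, since the paper applies the result only to supertrees). Also note that when the strict inequality sits at an edge it is not AM--GM that becomes strict but the subsequent bound $\bigl(\prod_{v\in e}B(v,e)\bigr)^{1/k}>\alpha^{1/k}$, which is strict only because $\prod_{v\in e}x_v>0$. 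Second, part (2) leans on the Rayleigh-type bound $k\sum_e\prod_{u\in e}y_u\le\rho(\mathcal{H})\sum_v y_v^k$ for all $y\ge0$; this is a genuine external input (the variational characterization of $\rho$ for symmetric nonnegative tensors, or the Collatz--Wielandt inequality of \cite{ChangPZ}) and should be cited explicitly rather than treated as self-evident, since for odd $k$ it holds only over the nonnegative orthant. With those two caveats addressed, the proof is complete and matches the approach of the cited source.
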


The following result reveals the numerical relationship between $\rho(G^{k})$
and $\rho(G),$ where $G^{k}$ is the $k$-th power of an ordinary graph $G.$

\begin{theorem}
\cite{ZhouBu} \label{Bu} Let $G^{k}$ be the $k$th power of an ordinary graph
$G.$ Then we have
\[
\rho(G^{k})=(\rho(G))^{\frac{2}{k}.}%
\]

\end{theorem}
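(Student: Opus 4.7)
The plan is to exhibit an explicit positive eigenvector of $\mathcal{A}(G^k)$ whose eigenvalue is exactly $\rho(G)^{2/k}$, and then invoke Theorem~\ref{Perron} to identify this eigenvalue with $\rho(G^k)$. Since each connected component of $G$ gives rise to exactly one connected component of $G^k$, and the spectral radius is the maximum over components in both cases, I may assume $G$ is connected and apply Perron--Frobenius for nonnegative matrices to get a positive eigenvector $y=(y_v)_{v\in V(G)}$ with $\sum_{w\sim v}y_w=\rho(G)\,y_v$ for every $v\in V(G)$.

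The key step is choosing the right ansatz on $V(G^k)$. I would set $x_v=y_v^{2/k}$ for every original vertex $v\in V(G)$, and, for each edge $e=\{u,w\}\in E(G)$ with auxiliary vertices $i_{e,1},\dots,i_{e,k-2}$ in $G^k$, set
\[
x_{i_{e,j}}=\bigl(y_u y_w/\rho(G)\bigr)^{1/k}\qquad (j=1,\dots,k-2).
\]
This is positive and symmetric within each batch of auxiliary vertices, which is what lets a single $1$-dimensional family of scalars do the job for each edge.

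Next I would verify $\mathcal{A}(G^k)\,x=\mu\,x^{[k-1]}$ with $\mu=\rho(G)^{2/k}$ by checking the two types of coordinates. At an auxiliary vertex $i_{e,j}$, only one edge of $G^k$ contains it, so $(\mathcal{A}(G^k)x)_{i_{e,j}}=x_u x_w\,x_{i_{e,j}}^{k-3}$, and the equation $x_u x_w=\mu\,x_{i_{e,j}}^{2}$ is exactly how $x_{i_{e,j}}$ was defined. At an original vertex $v\in V(G)$, the sum becomes $\sum_{w\sim v}x_w\,x_{i_{e,1}}\cdots x_{i_{e,k-2}}$ over neighbors $w$ of $v$ in $G$; substituting the formulas for $x$ and collecting powers of $y_v$, $y_w$, and $\rho(G)$ collapses the equation to $\sum_{w\sim v}y_w=\rho(G)\,y_v$, which is our hypothesis on $y$ precisely when $\mu=\rho(G)^{2/k}$.

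Finally, since $G$ is connected, so is $G^k$, so $\mathcal{A}(G^k)$ is weakly irreducible and Theorem~\ref{Perron} guarantees a unique positive eigenvector up to scalar, and its eigenvalue equals $\rho(G^k)$. The positive vector $x$ constructed above therefore realizes $\rho(G^k)=\rho(G)^{2/k}$. The only genuinely non-mechanical step is guessing the exponent $2/k$ on the original coordinates (together with the geometric-mean form on the auxiliary vertices); once this ansatz is in hand the verification is purely a bookkeeping of exponents.
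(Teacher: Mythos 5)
Your proof is correct, but note that the paper itself offers no proof of this statement: Theorem~\ref{Bu} is imported verbatim from the cited reference of Zhou, Sun, Wang and Bu, so there is nothing in the text to compare against. Your argument is a legitimate self-contained derivation, and the computation checks out: with $x_v=y_v^{2/k}$ on original vertices and $x_{i_{e,j}}=(y_uy_w/\rho(G))^{1/k}$ on the auxiliary vertices of $e=\{u,w\}$, the entry of $\mathcal{A}(G^k)x$ at an auxiliary vertex is $x_ux_wx_{i_{e,j}}^{k-3}=\mu x_{i_{e,j}}^{k-1}$ by construction, and at an original vertex $v$ it collapses to $\rho(G)^{-(k-2)/k}y_v^{(k-2)/k}\sum_{w\sim v}y_w=\rho(G)^{2/k}y_v^{2(k-1)/k}=\mu x_v^{k-1}$, so Theorem~\ref{Perron} applied to the weakly irreducible tensor $\mathcal{A}(G^k)$ identifies $\mu=\rho(G)^{2/k}$ with $\rho(G^k)$. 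Two small points deserve a sentence each in a polished write-up: the reduction to connected components uses that the spectral radius of a disconnected uniform hypergraph is the maximum over its components (true, since the eigenequations decouple, but worth stating), and your ansatz divides by $\rho(G)$, so the degenerate case of an edgeless component ($\rho(G)=0$, where both sides vanish) should be disposed of separately. Neither affects the validity of the argument.
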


Let $F_{n}$ $(n\geq5)$ be the tree obtained by coalescing the center of the
star $S_{n-4}$ and the center of the path $P_{5}.$ Ordering the trees on $n$
vertices according to their spectral radii was well studied in \cite{Hofm},
\cite{ChangHuang} and \cite{LinGuo}. We outline parts of the work in
\cite{Hofm} as follows.

\begin{theorem}
\cite{Hofm} \label{Hofm}Let $T$ be a tree on $n$ vertices $(n\geq5)$ and
$T\notin\{S_{n},S(1,n-3),S(2,n-4),F_{n}\}.$ Then we have
\[
\rho(S_{n})>\rho(S(1,n-3))>\rho(S(2,n-4))>\rho(F_{n})>\rho(T).
\]

\end{theorem}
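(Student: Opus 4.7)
The plan is to split the proof into two stages. First, I would establish the strict chain
$\rho(S_n) > \rho(S(1,n-3)) > \rho(S(2,n-4)) > \rho(F_n)$
by direct computation of characteristic polynomials. Each of the four named trees has only a bounded number of non-pendant vertices, so repeated application of the standard pendant-vertex recurrence $\phi_T(x) = x\,\phi_{T-v}(x) - \phi_{T-v-u}(x)$ (where $v$ is pendant with neighbour $u$) produces closed-form expressions in $n$. One then finds $\rho(S_n)^2 = n-1$, and that $\rho(S(1,n-3))^2$, $\rho(S(2,n-4))^2$ and $\rho(F_n)^2$ are the largest roots of explicit cubics or quartics in $x^2$ whose coefficients are linear in $n$; comparing the values of one characteristic polynomial at the spectral radius of another (or, equivalently, comparing the asymptotic expansion $n - c_T + O(1/n)$ of the respective squared spectral radii) yields the three strict inequalities.

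Second, I would prove that every tree $T$ outside the list $\{S_n, S(1,n-3), S(2,n-4), F_n\}$ satisfies $\rho(T) < \rho(F_n)$ via a reduction based on the classical grafting lemma for graph spectral radii. Let $x$ be the Perron eigenvector of $T$. If $u,v$ are two vertices with $x_u \geq x_v$, then detaching a pendant subtree from $v$ and re-attaching it at $u$ produces a tree $T'$ with $\rho(T') \geq \rho(T)$, with strict inequality under a mild non-degeneracy condition; this follows directly from the Rayleigh quotient. I would then argue by structural induction on the diameter and on the number of pendant paths of length two: for any tree $T$ not in the named list, one can always identify either a pendant edge or a pendant path of length two that is attached at a vertex of smaller Perron weight than some other vertex, relocate it, and strictly increase the spectral radius; iterating terminates in a tree in the list, and since $F_n$ has the smallest spectral radius among the four, $\rho(T) < \rho(F_n)$ follows.

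The main obstacle is the termination/classification step: one has to show that the only trees invariant (up to isomorphism) under all admissible grafting moves are exactly $S_n$, $S(1,n-3)$, $S(2,n-4)$ and $F_n$. Concretely this means checking that any tree with diameter at least five, or with at least three pendant paths of length two, or with a pendant path of length three attached off the centre, admits a strict spectral-radius-increasing move; this is a finite case analysis on the local structure near a vertex of maximum Perron weight. A secondary technical issue is handling equality in the grafting lemma when the Perron weights at the two candidate endpoints coincide, which requires an extra symmetry argument. Finally, the small cases $n = 5, 6$ should be verified by hand to make sure the named trees are well-defined and distinct.
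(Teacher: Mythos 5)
The paper does not prove this statement at all: it is imported verbatim from Hofmeister \cite{Hofm} (see also \cite{ChangHuang}, \cite{LinGuo}), so there is no internal proof to measure you against and your attempt has to stand on its own. Your first stage is unproblematic: $\rho(S_n)^2=n-1$, and for $S(1,n-3)$, $S(2,n-4)$ and $F_n$ the squared spectral radii are largest roots of explicit quadratics in $x=\rho^2$ (the paper itself records $\rho^4-(n-1)\rho^2+2(n-4)=0$ for $S(2,n-4)$ and $\rho^4-(n-2)\rho^2+(n-5)=0$ for $F_n$), so the chain $\rho(S_n)>\rho(S(1,n-3))>\rho(S(2,n-4))>\rho(F_n)$ follows by evaluating one polynomial at the root of another.

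The genuine gap is in your termination step. You write that the sequence of strictly $\rho$-increasing grafting moves ``terminates in a tree in the list, and since $F_n$ has the smallest spectral radius among the four, $\rho(T)<\rho(F_n)$ follows.'' This is a non sequitur: if the chain starting from $T$ happens to terminate at $S_n$, $S(1,n-3)$ or $S(2,n-4)$, you have only proved $\rho(T)$ is less than \emph{that} tree's spectral radius, which is strictly weaker than the required bound $\rho(T)<\rho(F_n)$ and does not imply it. The increasing moves push trees toward the star, so a generic chain will overshoot $F_n$; what you actually need is that from \emph{every} tree outside the list there exists a strictly increasing chain whose \emph{first} landing point in the list is $F_n$ itself (or, as in the literature, a direct sign analysis of $\phi_T$ at $\rho(F_n)$ for each structural class of $T$, in the spirit of how the present paper case-splits on the number of non-pendent vertices and compares against $F_{n'}^k$ via the weighted incidence matrix). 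Until that reachability claim is formulated and proved, the second stage does not establish the inequality $\rho(F_n)>\rho(T)$, which is the entire content of the theorem beyond the routine first stage.
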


Combining Theorems \ref{Bu} and \ref{Hofm}, we have the following corollary.

\begin{corollary}
\label{Ordering Tk}Let $T^{k}$ be the $k$th power of an ordinary tree $T$.
Suppose that $T^{k}$ has $n$ vertices, and $n^{\prime}=$ $\frac{n-1}%
{k-1}+1\geq5.$ Suppose $T\notin\{S_{n^{\prime}},S(1,n^{\prime}%
-3),S(2,n^{\prime}-4),F_{n^{\prime}}\},$ then we have
\[
\rho(S_{n^{\prime}}^{k})>\rho(S^{k}(1,n^{\prime}-3))>\rho(S^{k}(2,n^{\prime
}-4))>\rho(F_{n^{\prime}}^{k})>\rho(T^{k}).
\]

\end{corollary}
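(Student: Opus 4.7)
The plan is to apply the two earlier results in a straightforward chain: Theorem \ref{Hofm} gives the ordering of spectral radii at the level of ordinary trees on $n'$ vertices, and Theorem \ref{Bu} tells us that taking the $k$th power of an ordinary graph transforms its spectral radius by the map $x \mapsto x^{2/k}$. Since this map is strictly monotone increasing on the positive reals, the strict inequalities between tree spectral radii should survive verbatim when we pass to the power hypergraphs.

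First I would observe that, because $T^k$ has $n = (k-1)m + 1$ vertices where $m$ is the number of edges of $T$, the number of vertices of $T$ itself equals $n' = \frac{n-1}{k-1} + 1$, matching the parameter used in the statement of Theorem \ref{Hofm}. The hypothesis $n' \geq 5$ then allows a direct invocation of Theorem \ref{Hofm} to the underlying tree $T$, yielding
\[
\rho(S_{n'}) > \rho(S(1,n'-3)) > \rho(S(2,n'-4)) > \rho(F_{n'}) > \rho(T).
\]

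Next, I would note that since each of these trees has at least one edge (given $n' \geq 5$), all the displayed spectral radii are positive, so the function $x \mapsto x^{2/k}$ preserves strict inequalities here. Applying this function termwise and then invoking Theorem \ref{Bu}, which gives $\rho(G^k) = \rho(G)^{2/k}$ for any ordinary graph $G$, I rewrite each power as a hypergraph spectral radius to conclude
\[
\rho(S_{n'}^k) > \rho(S^k(1,n'-3)) > \rho(S^k(2,n'-4)) > \rho(F_{n'}^k) > \rho(T^k).
\]

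There is essentially no obstacle here: the corollary is a formal consequence of Theorems \ref{Bu} and \ref{Hofm}, and the only thing to verify carefully is the bookkeeping that the parameter $n'$ of the underlying tree coincides with $\frac{n-1}{k-1}+1$, which is immediate from the definition of the $k$th power construction.
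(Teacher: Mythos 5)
Your argument is correct and is exactly what the paper intends: it states the corollary as an immediate consequence of Theorems \ref{Bu} and \ref{Hofm}, i.e., apply the ordering of Theorem \ref{Hofm} to the underlying trees on $n'$ vertices and then transfer it to the $k$th powers via $\rho(G^k)=\rho(G)^{2/k}$ and the strict monotonicity of $x\mapsto x^{2/k}$ on the positive reals. Your bookkeeping that the underlying tree has $n'=\frac{n-1}{k-1}+1$ vertices is also right, so there is nothing to add.
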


\begin{definition}
\label{moving edges} \cite{LiShaoQi} Let $r\geq1$, $\mathcal{G}=(V,E)$ be a
hypergraph with $u\in V$ and $e_{1},\cdot\cdot\cdot,e_{r}$ $\in E$, such that
$u\notin e_{i}$ for $i=1,\cdot\cdot\cdot,r.$ Suppose that $v_{i}\in e_{i}$ and
write $e_{i}^{\prime}$ $=(e_{i}\backslash\{v_{i}\})$ $\cup\{u\}(i=1,\cdot
\cdot\cdot,r)$. Let $\mathcal{G}^{\prime}$ $=(V,E^{\prime})$ be the hypergraph
with $E^{\prime}=(E\backslash\{e_{1},\cdot\cdot\cdot,e_{r}\})\cup
\{e_{1}^{\prime},\cdot\cdot\cdot,e_{r}^{\prime}\}.$ Then we say that
$\mathcal{G}^{\prime}$ is obtained from $\mathcal{G}$ by moving edges
$(e_{1},\cdot\cdot\cdot,e_{r})$ from $(v_{1},\cdot\cdot\cdot,v_{r})$ to $u$.
\end{definition}

The effect on $\rho(\mathcal{G})$ of moving edges was studied by Li, Shao and
Qi (see Theorem \ref{moving edge}). The following fact was pointed out in
\cite{LiShaoQi}. If $\mathcal{G}$ is acyclic and there is an edge $e\in
E(\mathcal{G})$ containing all the vertices $u,v_{1},\cdot\cdot\cdot,v_{r},$
then the graph $\mathcal{G}^{\prime}$ defined as above contains no multiple edges.

\begin{theorem}
\label{moving edge} \cite{LiShaoQi} Let $r\geq1$, $\mathcal{G}$ be a connected
hypergraph, $\mathcal{G}^{\prime}$ be the hypergraph obtained from
$\mathcal{G}$ by moving edges $(e_{1},\cdot\cdot\cdot,e_{r})$ from
$(v_{1},\cdot\cdot\cdot,v_{r})$ to $u$, and $\mathcal{G}^{\prime}$ contain no
multiple edges. If $x$ is the principal eigenvector of $\mathcal{A}%
(\mathcal{G})$ corresponding to $\rho(\mathcal{G})$ and suppose that
$x_{u}\geq max_{1\leq i\leq r}\{x_{v_{i}}\}$, then $\rho(\mathcal{G}^{\prime
})>\rho(\mathcal{G}).$
\end{theorem}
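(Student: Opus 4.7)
The plan is to combine the Rayleigh-type variational bound for the adjacency tensor with the Perron--Frobenius uniqueness provided by Theorem~\ref{Perron}. A direct calculation using the permutation symmetry of $\mathcal{A}(\mathcal{H})$ gives, for any $y\in\mathbb{R}^n$,
$$\sum_{i_1,\ldots,i_k}\mathcal{A}(\mathcal{H})_{i_1\cdots i_k}\,y_{i_1}\cdots y_{i_k}\;=\;k\sum_{e\in E(\mathcal{H})}\prod_{v\in e}y_v,$$
and the standard variational principle for nonnegative symmetric tensors yields the one-sided bound
$$\rho(\mathcal{H})\;\ge\;k\sum_{e\in E(\mathcal{H})}\prod_{v\in e}y_v\quad\text{for every }y\ge 0\text{ with }\|y\|_k=1,$$
with equality precisely when $y$ is a principal eigenvector.

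Let $x$ be the principal eigenvector of $\mathcal{A}(\mathcal{G})$ normalized to $\|x\|_k=1$; by weak irreducibility of $\mathcal{G}$, $x>0$. Since $E(\mathcal{G})$ and $E(\mathcal{G}')$ agree outside of the replacements $e_i\leadsto e_i'=(e_i\setminus\{v_i\})\cup\{u\}$, testing $x$ in the Rayleigh bound for $\mathcal{G}'$ gives
$$\rho(\mathcal{G}')-\rho(\mathcal{G})\;\ge\;k\sum_{i=1}^r (x_u-x_{v_i})\prod_{v\in e_i\setminus\{v_i\}}x_v\;\ge\;0,$$
using the hypothesis $x_u\ge\max_i x_{v_i}$ together with $x>0$. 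If at least one $x_{v_i}<x_u$, the inequality is already strict and we are finished.

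I expect the main obstacle to be the degenerate case $x_{v_1}=\cdots=x_{v_r}=x_u$, in which the test-vector bound collapses to an equality and a separate argument is needed. I plan to treat it by contradiction: if $\rho(\mathcal{G}')=\rho(\mathcal{G})$ then $x$ is a maximizer of the Rayleigh quotient for $\mathcal{G}'$ as well, and since $x>0$ the KKT/Lagrange conditions (using the symmetry of the tensor to differentiate) force $\mathcal{A}(\mathcal{G}')x=\rho(\mathcal{G}')x^{[k-1]}$. Subtracting the eigen-equation $\mathcal{A}(\mathcal{G})x=\rho(\mathcal{G})x^{[k-1]}$ and reading off the $u$-th coordinate,
$$0\;=\;\bigl(\mathcal{A}(\mathcal{G}')x\bigr)_u-\bigl(\mathcal{A}(\mathcal{G})x\bigr)_u\;=\;\sum_{i=1}^r\prod_{v\in e_i\setminus\{v_i\}}x_v,$$
because the only edges incident to $u$ in $\mathcal{G}'$ that are absent from $\mathcal{G}$ are precisely $e_1',\ldots,e_r'$. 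The right-hand side is strictly positive, a contradiction. The no-multiple-edges hypothesis is exactly what ensures $\mathcal{A}(\mathcal{G}')$ really is a $k$-uniform adjacency tensor so that this coordinate-by-coordinate comparison is legitimate.
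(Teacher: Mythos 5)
Theorem~\ref{moving edge} is quoted from \cite{LiShaoQi} and the paper offers no proof of its own, so there is nothing internal to compare against; your argument is correct and is essentially the standard proof from that reference: test the principal eigenvector $x$ of $\mathcal{A}(\mathcal{G})$ in the Rayleigh quotient for $\mathcal{A}(\mathcal{G}')$ to get $\rho(\mathcal{G}')\geq\rho(\mathcal{G})$, then rule out equality by comparing the two eigenequations at the coordinate $u$, where positivity of $x$ makes the difference $\sum_{i=1}^r\prod_{v\in e_i\setminus\{v_i\}}x_v$ strictly positive. The only cosmetic imprecision is the claim that equality in the variational bound holds \emph{precisely} for a principal eigenvector (for a possibly disconnected $\mathcal{G}'$ the maximizer need not be unique or positive), but you only use the direction ``maximizer $\Rightarrow$ eigenvector,'' which your KKT step supplies, so the proof stands.
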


Denote by $N_{2}(\mathcal{T})$ the number of non-pendent vertices of
$\mathcal{T}.$ By using Theorem \ref{moving edge} (or modifying parts of the
proof of Theorem 21 of \cite{LiShaoQi}), we have the following observation.

\begin{lemma}
\label{induction}Let $\mathcal{T}$ be a $k$-uniform supertree on $n$ vertices
with $N_{2}(\mathcal{T})\geq2.$ Then there exists a $k$-uniform supertree
$\mathcal{T}^{\prime}$ on $n$ vertices with $N_{2}(\mathcal{T}^{\prime}%
)=N_{2}(\mathcal{T})-1$ and $\rho(\mathcal{T}^{\prime})>\rho(\mathcal{T}).$
\end{lemma}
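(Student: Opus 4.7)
The plan is to apply the edge-moving operation of Definition \ref{moving edges} at one non-pendent vertex and invoke Theorem \ref{moving edge}. Let $x$ be the principal eigenvector of $\mathcal{A}(\mathcal{T})$. Since $N_2(\mathcal{T}) \ge 2$, I may choose two distinct non-pendent vertices $u,w$ with $x_u \ge x_w$; one concrete choice is to let $u$ maximize $x_v$ over the non-pendent vertices and let $w$ be any other non-pendent vertex.

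Because $\mathcal{T}$ is a supertree, there is a unique edge $e_0$ at $w$ lying on the $w$-to-$u$ path, and the remaining $r = d(w)-1 \ge 1$ edges $e_1,\ldots,e_r$ at $w$ do not contain $u$ (else two distinct $w$-to-$u$ walks would create a cycle). Form $\mathcal{T}'$ by moving $(e_1,\ldots,e_r)$ from $(w,\ldots,w)$ to $u$, so that each $e_i$ is replaced by $e_i' = (e_i\setminus\{w\})\cup\{u\}$. Only the degrees of $u$ and $w$ change: $u$ gains $r$ incidences and remains non-pendent, while $w$ is left in $e_0$ alone and becomes pendent. Consequently $N_2(\mathcal{T}') = N_2(\mathcal{T})-1$.

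Next I verify that $\mathcal{T}'$ is a $k$-uniform supertree without repeated edges, as is needed to apply Theorem \ref{moving edge}. The vertex set, edge count, and edge sizes are unchanged, and $\mathcal{T}'$ remains connected because every vertex of $e_i\setminus\{w\}$ is now adjacent to $u$ through $e_i'$, while $w$ stays attached through $e_0$. In the supertree $\mathcal{T}$ any two distinct edges share at most one vertex (otherwise they form a Berge $2$-cycle), so an edge $f$ of $\mathcal{T}'$ inherited from $\mathcal{T}$ cannot equal any $e_i'$---this would force $f$ and $e_i$ to share the $k-1$ vertices of $e_i\setminus\{w\}$---and $e_i' = e_j'$ for $i\ne j$ is likewise impossible. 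Hence the edges of $\mathcal{T}'$ are distinct, the relation $m(k-1) = n-1$ carries over, and the Berge-type characterization quoted earlier in the paper identifies $\mathcal{T}'$ as a supertree. Since $x_u \ge x_w$, Theorem \ref{moving edge} yields $\rho(\mathcal{T}') > \rho(\mathcal{T})$.

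The step I expect to be the main obstacle is the multiple-edge check. The fact cited beneath Definition \ref{moving edges} guarantees no multi-edges only when a single edge contains $u$ together with every $v_i$, but in our setting $u$ and $w$ need not be adjacent. The rescue is the structural rigidity of supertrees, namely that any two distinct edges intersect in at most one vertex, which immediately rules out multi-edges after the move when $k \ge 3$ (the $k = 2$ case reduces to the analogous statement for ordinary trees).
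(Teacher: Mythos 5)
Your proposal is correct and follows exactly the route the paper intends: the paper offers no written-out proof, merely noting that the lemma follows ``by using Theorem \ref{moving edge}'', and your argument---moving all but one edge at a second non-pendent vertex $w$ to the non-pendent vertex $u$ maximizing the principal eigenvector---is precisely that argument with the details (choice of $u$, the multiple-edge check via linearity of acyclic hypergraphs, and preservation of the supertree property) filled in correctly.
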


\begin{lemma}
\cite{LiShaoQi} \label{S(a,b)}Let $a,b,c,d$ be nonnegative integers with
$a+b=c+d.$ Suppose that $a\leq b,$ $c\leq d$ and $a<c$, then we have
$\rho(S^{k}(a,b))>\rho(S^{k}(c,d)).$
\end{lemma}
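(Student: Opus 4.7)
The plan is to invoke the moving-edges machinery of Theorem \ref{moving edge}. Since $a+b=c+d$, $a<c$, and $c\leq d$, we have $b-d=c-a>0$, so $b>d$. Consequently $S^k(a,b)$ is obtained from $S^k(c,d)$ by moving the $c-a$ pendent edges at the $c$-side center $w$ over to the $d$-side center $u$; since each such pendent edge carries $k-1$ distinct pendent vertices, the resulting hypergraph contains no multiple edges. By Theorem \ref{moving edge}, it therefore suffices to prove that the principal eigenvector $x$ of $\mathcal{A}(S^k(c,d))$ satisfies $x_u\geq x_w$.

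To verify this, I would exploit the obvious symmetries of $S^k(c,d)$: by uniqueness of the principal eigenvector, all $k-1$ pendent vertices inside a given pendent edge share a common value, any two pendent edges at $u$ are interchangeable (and likewise at $w$), and the $k-2$ intermediate vertices of the central edge are interchangeable. Writing $\gamma$, $\delta$, $\mu$ for the common eigenvector values at the $u$-side pendent vertices, the $w$-side pendent vertices, and the central-edge intermediate vertices, respectively, and setting $\rho=\rho(S^k(c,d))$, the eigenequations at any pendent vertex and at any intermediate vertex force $\gamma=x_u/\rho$, $\delta=x_w/\rho$, and $\mu^{2}=x_u x_w/\rho$.

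Substituting these into the eigenequations at $u$ and at $w$ and introducing the shorthand $y=x_u^{k/2}$, $z=x_w^{k/2}$, a short manipulation reduces the system to
\[
(\rho^k-d)\,y\;=\;\rho^{k/2}\,z,\qquad (\rho^k-c)\,z\;=\;\rho^{k/2}\,y.
\]
Since $y,z>0$, both $\rho^k-c$ and $\rho^k-d$ must be positive; dividing the two identities then gives $(x_u/x_w)^{k}=(\rho^k-c)/(\rho^k-d)\geq 1$, with equality iff $c=d$. Hence $x_u\geq x_w$, and Theorem \ref{moving edge} yields $\rho(S^k(a,b))>\rho(S^k(c,d))$.

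The main obstacle is the inequality $x_u\geq x_w$; once it is in hand, everything else is a direct invocation of Theorem \ref{moving edge}. An alternative shortcut would be to appeal to Theorem \ref{Bu} and reduce the statement to the classical ordering of ordinary double stars, but I would keep the hypergraph-native argument above because it makes the role of the moving-edges operation transparent and is in keeping with the style of the surrounding proofs.
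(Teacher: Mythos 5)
Your proof is correct, but note that the paper itself offers no proof of this lemma at all: it is imported verbatim from \cite{LiShaoQi}, so there is no in-paper argument to compare against. What you have written is a self-contained derivation in the spirit of the cited source and of the paper's own proof of Lemma \ref{T(a,b,c)}: reduce to Theorem \ref{moving edge} by moving the $c-a$ pendent edges from the smaller center $w$ to the larger center $u$ (no multiple edges arise since distinct pendent edges have disjoint pendent-vertex sets), and then establish the key hypothesis $x_u\geq x_w$ by an explicit eigenvector computation. I checked the computation: the eigenequation at a pendent vertex of an edge at $u$ gives $\rho\gamma=x_u$, at an intermediate vertex of the central edge gives $\rho\mu^2=x_ux_w$, and substituting into the equations at $u$ and $w$ does yield $(\rho^k-d)x_u^{k/2}=\rho^{k/2}x_w^{k/2}$ and $(\rho^k-c)x_w^{k/2}=\rho^{k/2}x_u^{k/2}$, whence $(x_u/x_w)^k=(\rho^k-c)/(\rho^k-d)\geq1$ for $c\leq d$, with all quantities positive by Theorem \ref{Perron}. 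The strict inequality in the conclusion then comes from Theorem \ref{moving edge} itself, which is strict whenever $x_u\geq\max_i x_{v_i}$ and at least one edge is moved ($c-a\geq1$). Your proposed shortcut via Theorem \ref{Bu} is also legitimate and arguably closer to how this paper actually operates (it repeatedly converts statements about $k$-th powers into statements about ordinary trees), since $\rho(S^k(a,b))=\rho(S(a,b))^{2/k}$ reduces the claim to the classical ordering of double stars; the hypergraph-native argument buys independence from Theorem \ref{Bu} and from the classical result, at the cost of the eigenvector computation.
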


\begin{lemma}
\label{T(a,b,c)}Let $1\leq$ $t_{1}\leq t_{2}\leq t_{3}$ be three integers with
$t_{1}+t_{2}+t_{3}=m-1.$ Then we have
\[
\rho(\mathcal{T(}1,1,m-3\mathcal{)})\geq\rho(\mathcal{T(}t_{1},t_{2}%
,t_{3}\mathcal{)}),
\]
with equality holding if and only if $t_{2}=1.$
\end{lemma}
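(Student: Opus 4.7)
The plan is to prove the lemma in one shot by converting $\mathcal{T}(t_1,t_2,t_3)$ into $\mathcal{T}(1,1,m-3)$ via a single application of the edge-moving operation (Theorem \ref{moving edge}). The hypothesis of that theorem is a comparison between principal eigenvector entries at the three non-pendent vertices $u_1,u_2,u_3$, so the main preparatory work is to understand the principal eigenvector of $\mathcal{T}:=\mathcal{T}(t_1,t_2,t_3)$.

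First I would invoke the uniqueness part of Theorem \ref{Perron} together with the symmetries of $\mathcal{T}$ to reduce the principal eigenvector $x$ to a handful of parameters: set $y_i:=x_{u_i}$ for $i=1,2,3$, let $z_i$ denote the common value of $x$ on the pendent vertices of every pendent edge attached at $u_i$, and let $w$ denote the common value of $x$ on the $k-3$ pendent vertices inside the central edge. Writing $(\mathcal{A}x)_v=\rho x_v^{k-1}$ at a pendent vertex of a pendent edge at $u_i$ gives $z_i=y_i/\rho$, and the eigenequation at $u_i$, after substituting $z_i$ and multiplying through by $y_i$, collapses to
\[
y_i^{k}(\rho^{k}-t_i)=\rho^{k-1}\,y_1 y_2 y_3\,w^{k-3}\qquad(i=1,2,3).
\]
The right-hand side is independent of $i$ and strictly positive, so $\rho^{k}>t_i$ and $y_i^{k}=C\rho^{k-1}/(\rho^{k}-t_i)$ for a common constant $C>0$. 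Since $t_1\le t_2\le t_3$, this gives $y_1\le y_2\le y_3$.

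With the ordering in hand I would finish by edge-moving. If $t_2=1$, then $t_1=1$ as well and $\mathcal{T}=\mathcal{T}(1,1,m-3)$, giving equality. Otherwise $t_2\ge 2$, so $r:=(t_1-1)+(t_2-1)\ge 1$. Take as the $r$ edges to be moved all but one of the pendent edges at $u_1$ and all but one of those at $u_2$, with the corresponding $v_i\in\{u_1,u_2\}$ in each edge; the target vertex is $u=u_3$, which lies in no pendent edge. Because $y_3\ge\max\{y_1,y_2\}$ and the resulting hypergraph has no multiple edges (the pendent vertices of each moved edge belong to no other edge), Theorem \ref{moving edge} applies. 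A direct count confirms that the new hypergraph is precisely $\mathcal{T}(1,1,m-3)$, so the theorem yields $\rho(\mathcal{T}(1,1,m-3))>\rho(\mathcal{T}(t_1,t_2,t_3))$, as required.

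I expect the main obstacle to be the symmetry reduction: one must verify that the vertex permutations used are actually automorphisms of $\mathcal{T}$ (permuting the $k-1$ vertices within any single pendent edge, permuting the $t_i$ pendent edges incident at $u_i$, and permuting the $k-3$ pendent vertices inside the central edge) and then invoke uniqueness of the Perron eigenvector to conclude that $x$ is constant on each orbit. Once that reduction is secured, both the closed formula for $y_i^{k}$ and the check that the moved hypergraph is $\mathcal{T}(1,1,m-3)$ are essentially bookkeeping.
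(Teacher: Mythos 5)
Your proposal is correct and follows essentially the same route as the paper: a single application of Theorem \ref{moving edge}, moving $t_1-1$ pendent edges from $u_1$ and $t_2-1$ from $u_2$ to $u_3$ to obtain $\mathcal{T}(1,1,m-3)$. The only difference is that where the paper simply assumes without loss of generality that $x_{u_3}=\max_i x_{u_i}$ (harmless, since whichever vertex carries the maximum, moving all-but-one pendent edges from the other two to it yields $\mathcal{T}(1,1,m-3)$ and the count of moved edges is positive because $t_2>1$), you prove the ordering $x_{u_1}\leq x_{u_2}\leq x_{u_3}$ outright via the symmetry reduction and the identity $y_i^{k}(\rho^{k}-t_i)=\rho^{k-1}y_1y_2y_3w^{k-3}$ --- a correct but strictly stronger fact than the argument requires.
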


\begin{proof}
If $t_{2}=1,$ the result is obvious. Now we suppose $t_{2}>1,$ thus $t_{3}>1.$
Let $u_{1},u_{2}$ and $u_{3}$ be the (only) three non-pendent vertices of
$\mathcal{T(}t_{1},t_{2},t_{3}\mathcal{)}$ with $d(u_{i})=t_{i}+1$, $i=1,2,3.$
It is easy to see that $u_{i}$ is incident to $t_{i}$ pendent edges,
$i=1,2,3.$ Let $x$ be the principal eigenvector of $\mathcal{A}(\mathcal{T(}%
t_{1},t_{2},t_{3}\mathcal{)})$ corresponding to $\rho(\mathcal{T(}t_{1}%
,t_{2},t_{3}\mathcal{)}).$ Without loss of generality we suppose that
$x_{u_{3}}=max_{1\leq i\leq3}\{x_{u_{i}}\}$. Let $\mathcal{G}$ be obtained
from $\mathcal{T(}t_{1},t_{2},t_{3}\mathcal{)}$ by moving $t_{1}-1$ pendent
edges from $u_{1}$ to $u_{3},$ and moving $t_{2}-1$ pendent edges from $u_{2}$
to $u_{3}.$ Then $\mathcal{G}$ is isomorphic to $\mathcal{T(}%
1,1,m-3\mathcal{)}$. Noting that $t_{2}>1,$ by Theorem \ref{moving edge} we
have $\rho(\mathcal{T(}1,1,m-3\mathcal{)})>\rho(\mathcal{T(}t_{1},t_{2}%
,t_{3}\mathcal{)})$.
\end{proof}

By Theorem \ref{Bu} we know that $\rho(S^{k}(2,n^{\prime}-4))$ is determined
by $\rho(S(2,n^{\prime}-4)),$ and $\rho(F_{n^{\prime}}^{k})$ is determined
by\ $\rho(F_{n^{\prime}}).$ We will use the weighted incidence matrix method
to compare $\rho(\mathcal{T(}1,1,m-3\mathcal{)})$ with $\rho(S^{k}%
(2,n^{\prime}-4))$ and $\rho(F_{n^{\prime}}^{k}).$

\begin{lemma}
\label{T(1,,1,)}Suppose that $n^{\prime}=\frac{n-1}{k-1}+1,$ $m=n^{\prime
}-1\geq4.$ We have
\[
\rho(S^{k}(2,n^{\prime}-4))>\rho(\mathcal{T(}1,1,m-3\mathcal{)})>\rho
(F_{n^{\prime}}^{k}).
\]

\end{lemma}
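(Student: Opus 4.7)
The plan is to apply Theorem~\ref{LuMan} with a single parameter $\alpha_0$ pinned down by $\mathcal{T}(1,1,m-3)$: I would first produce an $\alpha_0$-normal weighting on $\mathcal{T}(1,1,m-3)$, which identifies $\rho(\mathcal{T}(1,1,m-3)) = \alpha_0^{-1/k}$, and then, using the same $\alpha_0$, exhibit a strictly $\alpha_0$-supernormal weighting on $S^k(2,n'-4)$ and a strictly $\alpha_0$-subnormal weighting on $F_{n'}^k$. Since all three hypergraphs are acyclic, consistency of the weightings is automatic.

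To build the normal weighting on $\mathcal{T}(1,1,m-3)$, put $B(v,e)=1$ whenever $v$ is pendent in $e$; on the central edge $e_0$ containing the three hubs $u_1,u_2,u_3$, use the symmetry $u_1\leftrightarrow u_2$ to set $B(u_1,e_0)=B(u_2,e_0)=1-\alpha_0$ and $B(u_3,e_0)=1-(m-3)\alpha_0$; give weight $\alpha_0$ to each hub in each of its pendent edges. The vertex-sum conditions hold by construction, each pendent edge has product $\alpha_0$, and the product on $e_0$ forces $\alpha_0$ to satisfy the defining equation
\[
(1-\alpha_0)^2\bigl(1-(m-3)\alpha_0\bigr)=\alpha_0.
\]
A sign check of this cubic at $0$ and at $1/(m-3)$ produces a root $\alpha_0\in(0,1/(m-3))$, which keeps every weight positive. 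Hence $\rho(\mathcal{T}(1,1,m-3)) = \alpha_0^{-1/k}$.

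For $S^k(2,n'-4)$, denote the two hubs by $v_1$ (of degree $3$) and $v_2$ (of degree $n'-3$), joined by the central edge $e_0$. Take $B=1$ on pendent vertices, $\alpha_0$ on each hub in each of its pendent edges, $B(v_1,e_0)=1-2\alpha_0$, and $B(v_2,e_0)=1-(n'-4)\alpha_0$. All vertex sums equal $1$, all pendent-edge products equal $\alpha_0$, and the only nontrivial check is the product on $e_0$. Using the defining equation one expands
\[
(1-2\alpha_0)\bigl(1-(n'-4)\alpha_0\bigr)-\alpha_0 \;=\; \alpha_0^{2}\bigl((n'-4)\alpha_0-1\bigr),
\]
which is strictly negative because $\alpha_0<1/(n'-4)$. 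So $S^k(2,n'-4)$ is strictly $\alpha_0$-supernormal and Theorem~\ref{LuMan}(2) gives the first inequality.

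For $F_{n'}^k$, label the path part of $F_{n'}$ as $v_1\,e_{12}\,v_2\,e_{23}\,v_3\,e_{34}\,v_4\,e_{45}\,v_5$ and let $f_1,\dots,f_{n'-5}$ be the pendent edges at $v_3$. Put $B=1$ on pendent vertices, $B(v_2,e_{12})=B(v_4,e_{45})=B(v_3,f_j)=\alpha_0$, $B(v_2,e_{23})=B(v_4,e_{34})=1-\alpha_0$, and $B(v_3,e_{23})=B(v_3,e_{34})=\alpha_0/(1-\alpha_0)$. Every edge has product exactly $\alpha_0$ and every vertex sum is $1$ except at $v_3$, where the condition to verify is
\[
\frac{2\alpha_0}{1-\alpha_0} + (n'-5)\alpha_0 \;\le\; 1,
\]
equivalently $(n'-5)\alpha_0^{2}-(n'-2)\alpha_0+1\ge 0$. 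Modulo the defining equation I expect the identity
\[
(n'-5)\alpha_0^{2}-(n'-2)\alpha_0+1 \;=\; \frac{\alpha_0^{3}}{1-\alpha_0} \;>\; 0,
\]
giving strict $\alpha_0$-subnormality of $F_{n'}^k$, whence Theorem~\ref{LuMan}(1) gives the second inequality.

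The main obstacle is the two polynomial identities modulo the defining cubic that drive the strict sign checks on $e_0$ and at $v_3$. Both reduce to routine algebra and their only qualitative input is the bound $\alpha_0<1/(m-3)=1/(n'-4)$, which itself follows from positivity of $B(u_3,e_0)$ in the normal weighting on $\mathcal{T}(1,1,m-3)$.
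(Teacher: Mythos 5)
Your proof is correct, and both of the key identities do check out: modulo the cubic $(1-\alpha_0)^2\bigl(1-(m-3)\alpha_0\bigr)=\alpha_0$ one indeed gets $(1-2\alpha_0)\bigl(1-(n^{\prime}-4)\alpha_0\bigr)-\alpha_0=\alpha_0^{2}\bigl((n^{\prime}-4)\alpha_0-1\bigr)$ and $(n^{\prime}-5)\alpha_0^{2}-(n^{\prime}-2)\alpha_0+1=\alpha_0^{3}/(1-\alpha_0)$. But your route is genuinely different from the paper's. The paper never applies the weighted-incidence-matrix machinery to $S^{k}(2,n^{\prime}-4)$ or $F_{n^{\prime}}^{k}$ at all: it uses Theorem \ref{Bu} to write $\rho(S^{k}(2,n^{\prime}-4))=\rho(S(2,n^{\prime}-4))^{2/k}$ and $\rho(F_{n^{\prime}}^{k})=\rho(F_{n^{\prime}})^{2/k}$, takes $\alpha=1/\rho^{2}$ for each of these two \emph{ordinary-tree} spectral radii (two different values of $\alpha$, each satisfying an explicit quartic coming from the tree's characteristic polynomial), and shows that the single weighting on $\mathcal{T}(1,1,m-3)$ is strictly $\alpha$-subnormal for the first choice and strictly (and consistently) $\alpha$-supernormal for the second. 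You instead pin down one $\alpha_0$ by making $\mathcal{T}(1,1,m-3)$ itself $\alpha_0$-normal and then decorate the other two hypergraphs around it. What your version buys is self-containedness: no appeal to Theorem \ref{Bu} and no need to know the eigenvalue equations of $S(2,n^{\prime}-4)$ and $F_{n^{\prime}}$. What it costs is three constructions instead of one, plus reliance on the implication "connected, consistently $\alpha$-normal $\Rightarrow\rho=\alpha^{-1/k}$", which is in \cite{LuMan} but is not among the statements quoted as Theorem \ref{LuMan} in this paper, so you would need to import it explicitly.

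Two small points to patch. First, positivity of $B(v_{1},e_{0})=1-2\alpha_0$ in your $S^{k}(2,n^{\prime}-4)$ weighting needs $\alpha_0<1/2$, which does not follow from $\alpha_0<1/(n^{\prime}-4)$ when $n^{\prime}=5$; this is easily repaired by noting that the cubic equals $-\tfrac14-\tfrac{m-3}{8}<0$ at $\alpha=1/2$, so a root exists in $\bigl(0,\min\{1/2,1/(m-3)\}\bigr)$ and that is the root to take. Second, you conclude "strictly" sub/supernormal from a single strict inequality in your chosen weighting rather than from non-$\alpha_0$-normality of the hypergraph as Definitions \ref{subnormal} and \ref{supernormal} literally require; this is exactly the same convention the paper itself uses, so it is no additional gap, but it is worth a remark.
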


\begin{proof}
Denote by $u_{1},u_{2}$ and $u_{3}$ three non-pendent vertices of
$\mathcal{T(}1,1,m-3\mathcal{)}$. Label the $m$ edges of $\mathcal{T(}%
1,1,m-3\mathcal{)}$ as follows. The unique non-pendent edge (the edge
containing $u_{1},u_{2}$ and $u_{3})$ is numbered $e_{0},$ the pendent edge
containing $u_{1}$ is numbered $e_{1},$ the pendent edge containing $u_{2}$ is
numbered $e_{2},$ and the pendent edges containing $u_{3}$ are numbered
$e_{3},\cdot\cdot\cdot,e_{m-1}.$ Now we construct an $n\times m$ matrix $B.$
For any vertex $v$ and any edge $e$ of $\mathcal{T(}1,1,m-3\mathcal{)}$, let
$B(v,e)=0$ if $v\notin e$. For any pendent vertex $v$ in an edge $e,$ let
$B(v,e)=1$. For the non-pendent vertices $u_{1},u_{2}$ and $u_{3},$ let
$B(u_{1},e_{1})=\alpha,B(u_{1},e_{0})=1-\alpha;$ $B(u_{2},e_{2})=\alpha
,B(u_{2},e_{0})=1-\alpha;$ and let $B(u_{3},e_{i})=\alpha,$ for $i=3,\cdot
\cdot\cdot,m-1$, $B(u_{3},e_{0})=1-(m-3)\alpha.$ According to the above rules,
we say that for any vertex $v$ of $\mathcal{T(}1,1,m-3\mathcal{)}$ we have
\begin{equation}%
{\displaystyle\sum\limits_{e:v\in e}}
B(v,e)=1. \label{sum}%
\end{equation}
For the pendent edge $e_{i}$ $(i=1,2,\cdot\cdot\cdot,m-1),$ we have
\begin{equation}%
{\displaystyle\prod\limits_{v:v\in e_{i}}}
B(v,e_{i})=\alpha. \label{P1}%
\end{equation}
For the unique non-pendent edge $e_{0}$ we have
\[%
{\displaystyle\prod\limits_{v:v\in e_{0}}}
B(v,e_{0})=(1-\alpha)^{2}[1-(m-3)\alpha],
\]
and then
\begin{equation}%
{\displaystyle\prod\limits_{v:v\in e_{0}}}
B(v,e_{0})-\alpha=-(m-3)\alpha^{3}+(2m-5)\alpha^{2}-m\alpha+1. \label{alapha}%
\end{equation}
(1). Write $\rho=\rho(S(2,n^{\prime}-4))$ for short. By Theorem \ref{Bu}, we
have $\rho(S^{k}(2,n^{\prime}-4))=\rho^{\frac{2}{k}}.$ It is easy to check
that the tree $S(2,n^{\prime}-4)$ contains $m$ edges and the value $\rho$
satisfies
\begin{equation}
\rho^{4}-m\rho^{2}+2(m-3)=0. \label{rho}%
\end{equation}
As we all know that
\[
\rho>\sqrt{\Delta(S(2,n^{\prime}-4))}=\sqrt{n^{\prime}-3}=\sqrt{m-2},
\]
where $\Delta(S(2,n^{\prime}-4))$ is the maximum degree of the tree
$S(2,n^{\prime}-4).$

Take $\alpha=\frac{1}{\rho^{2}}.$ Then $\alpha<\frac{1}{m-2}$ and
\[
1-\alpha\geq1-(m-3)\alpha>1-\frac{m-3}{m-2}>0.
\]
So $B(v,e)\ >0$ for any vertex $v$ and any edge $e$ of $\mathcal{T(}%
1,1,m-3\mathcal{)}$ when $v\in e,$ i.e., the matrix $B$ is a weighted
incidence matrix of $\mathcal{T(}1,1,m-3\mathcal{)}$ according to Definition
\ref{weighted incidence matrix}. Now we will show $\mathcal{T(}%
1,1,m-3\mathcal{)}$ is strictly $\alpha$-subnormal with $\alpha=\frac{1}%
{\rho^{2}}$. Combining (\ref{sum}) and (\ref{P1}), we only need to show $%
{\displaystyle\prod\limits_{v:v\in e_{0}}}
B(v,e_{0})>\alpha.$ In fact by (\ref{alapha}) and (\ref{rho}) we have
\begin{align*}%
{\displaystyle\prod\limits_{v:v\in e_{0}}}
B(v,e_{0})-\alpha &  =-(m-3)\alpha^{3}+(2m-5)\alpha^{2}-m\alpha+1\\
&  =\frac{1}{\rho^{6}}[\rho^{6}-m\rho^{4}+(2m-5)\rho^{2}-(m-3)]\\
&  =\frac{1}{\rho^{6}}[\rho^{2}-(m-3)]\\
&  >0.
\end{align*}
So for the unique non-pendent edge $e_{0}$ we have
\begin{equation}%
{\displaystyle\prod\limits_{v:v\in e_{0}}}
B(v,e_{0})>\alpha. \label{P2}%
\end{equation}

By (1) of Theorem \ref{LuMan}, we have
\[
\rho(\mathcal{T(}1,1,m-3\mathcal{)})<\alpha^{-\frac{1}{k}}=\rho^{\frac{2}{k}%
}=\rho(S^{k}(2,n^{\prime}-4)).
\]
(2). Write $\rho=\rho(F_{n^{\prime}})$ for short. By Theorem \ref{Bu}, we have
$\rho(F_{n^{\prime}}^{k})=\rho^{\frac{2}{k}}.$ It is easy to see that the tree
$F_{n^{\prime}}$ contains $m$ edges and the value $\rho$ satisfies
\begin{equation}
\rho^{4}-(m-1)\rho^{2}+(m-4)=0, \label{rho2}%
\end{equation}
and
\[
\rho>\sqrt{\Delta(F_{n^{\prime}})}=\sqrt{n^{\prime}-3}=\sqrt{m-2},
\]
where $\Delta(F_{n^{\prime}})$ is the maximum degree of the tree
$F_{n^{\prime}}.$

Take $\alpha=\frac{1}{\rho^{2}}.$ Then $\alpha<\frac{1}{m-2}$ and
\[
1-\alpha\geq1-(m-3)\alpha>1-\frac{m-3}{m-2}>0.
\]
So $B(v,e)\ >0$ for any \ vertex $v$ and any edge $e$ of $\mathcal{T(}%
1,1,m-3\mathcal{)}$ when $v\in e,$ i.e., the matrix $B$ is a weighted
incidence matrix of the supertree $\mathcal{T(}1,1,m-3\mathcal{)}$. Now we
will show $\mathcal{T(}1,1,m-3\mathcal{)}$ is strictly $\alpha$-supernormal
with $\alpha=\frac{1}{\rho^{2}}.$ Combining (\ref{sum}) and (\ref{P1}), we
only need to show $%
{\displaystyle\prod\limits_{v:v\in e_{0}}}
B(v,e_{0})<\alpha.$ In fact by (\ref{alapha}) and (\ref{rho2}) we have
\begin{align*}%
{\displaystyle\prod\limits_{v:v\in e_{0}}}
B(v,e_{0})-\alpha &  =-(m-3)\alpha^{3}+(2m-5)\alpha^{2}-m\alpha+1\\
&  =\frac{1}{\rho^{6}}[\rho^{6}-m\rho^{4}+(2m-5)\rho^{2}-(m-3)]\\
&  =\frac{1}{\rho^{6}}[-\rho^{4}+(m-1)\rho^{2}-(m-3)]\\
&  =-\frac{1}{\rho^{6}}\\
&  <0.
\end{align*}
So for the unique non-pendent edge $e_{0}$ we have
\begin{equation}%
{\displaystyle\prod\limits_{v:v\in e_{0}}}
B(v,e_{0})<\alpha. \label{P3}%
\end{equation}
Clearly, the weighted incidence matrix $B$ of $\mathcal{T(}1,1,m-3\mathcal{)}$
is consistent, since the supertree $\mathcal{T(}1,1,m-3\mathcal{)}$ is
acyclic. By (2) of Theorem \ref{LuMan}, we have
\[
\rho(\mathcal{T(}1,1,m-3\mathcal{)})>\alpha^{-\frac{1}{k}}=\rho^{\frac{2}{k}%
}=\rho(F_{n^{\prime}}^{k}).
\]
The proof is complete.
\end{proof}

\section{The proofs of the main results}

Suppose that $n^{\prime}=\frac{n-1}{k-1}+1,$ and $m=n^{\prime}-1.$ Recall that
$N_{2}(\mathcal{T})$ is the number of non-pendent vertices of a supertree
$\mathcal{T}.$ For a $k$-uinform supertree $\mathcal{T}$ on $n$ vertices we
have the following observations.

(1). $N_{2}(\mathcal{T})=1$ if and only if $\mathcal{T\cong}S_{n^{\prime}}%
^{k};$

(2). $N_{2}(\mathcal{T})=2$ if and only if $\mathcal{T\cong}S^{k}(a,b)$ for
some integers $a,b$, where $b\geq a\geq1$ and $a+b=n^{\prime}-2;$

(3.1). $N_{2}(\mathcal{T})=3$ and three non-pendent vertices incident to one
edge if and only if $\mathcal{T\cong T(}t_{1},t_{2},t_{3}\mathcal{)}$ for some
interges $t_{1},t_{2},t_{3}$, where $t_{1}+t_{2}+t_{3}=m-1.$

(3.2). $N_{2}(\mathcal{T})=3$ and three non-pendent vertices not incident to
one edge, if and only if $\mathcal{T\cong}T^{k}$ for some ordinary tree $T$
and $T$ containing three non-pendent vertices.

\bigskip

\begin{proof}
[Proof of Theorem \ref{Main 1}]Since $\mathcal{T\ncong}S_{n^{\prime}}^{k},$ we
have $N_{2}(\mathcal{T})\geq2.$

If $N_{2}(\mathcal{T})=2,$ then $\mathcal{T\cong}S^{k}(a,b)$ for some integers
$a,b$, where $b\geq a\geq1$ and $a+b=n^{\prime}-2.$ Since $\mathcal{T\ncong
}S^{k}(1,n^{\prime}-3),$ by Lemma \ref{S(a,b)}, we have
\[
\rho(\mathcal{T})\leq\rho(S^{k}(2,n^{\prime}-4)),
\]
with equality holding if and if $\mathcal{T\cong}S^{k}(2,n^{\prime}-4).$

If $N_{2}(\mathcal{T})=3$ and $\mathcal{T\cong T(}t_{1},t_{2},t_{3}%
\mathcal{)},$ then combining Lemmas \ref{T(a,b,c)} and \ref{T(1,,1,)} we have
\[
\rho(\mathcal{T})\leq\rho(\mathcal{T(}1,1,m-3\mathcal{)})<\rho(S^{k}%
(2,n^{\prime}-4)).
\]

If $N_{2}(\mathcal{T})=3$ and $\mathcal{T\cong}T^{k}$ for some ordinary tree
$T,$ then $T$ contains three non-pendent vertices and then $T\notin
\{S_{n^{\prime}},S(a,b)\}.$ From Corollary \ref{Ordering Tk}, we have
\[
\rho(\mathcal{T})<\rho(S^{k}(2,n^{\prime}-4)).
\]

If $N_{2}(\mathcal{T})\geq4,$ then there exists a $k$-uinform supertree
$\mathcal{T}^{\prime}$ with $N_{2}(\mathcal{T}^{\prime})=3$ and $\rho
(\mathcal{T}^{\prime})>\rho(\mathcal{T})$ by Lemma \ref{induction}. Thus we
have
\[
\rho(\mathcal{T})<\rho(\mathcal{T}^{\prime})<\rho(S^{k}(2,n^{\prime}-4)).
\]

The proof is complete.
\end{proof}

\bigskip

\begin{proof}
[Proof of Theorem \ref{Main 2}]Since $\mathcal{T\ncong}S_{n^{\prime}}^{k},$ we
have $N_{2}(\mathcal{T})\geq2.$

If $N_{2}(\mathcal{T})=2,$ then $\mathcal{T\cong}S^{k}(a,b)$ for some interges
$a,b$, where $b\geq a\geq1,$ and $a+b=n^{\prime}-2.$ Since Since
$\mathcal{T\notin}\{S^{k}(1,n^{\prime}-3),S^{k}(2,n^{\prime}-4)\},$ by Lemma
\ref{S(a,b)}, Corollary \ref{Ordering Tk} and Lemma \ref{T(1,,1,)}, we have
\[
\rho(\mathcal{T})\leq\rho(S^{k}(3,n^{\prime}-5))<\rho(F_{n^{\prime}}^{k}%
)<\rho(\mathcal{T(}1,1,m-3\mathcal{)}).
\]

If $N_{2}(\mathcal{T})=3$ and $\mathcal{T\cong T(}t_{1},t_{2},t_{3}%
\mathcal{)},$ then from Lemma \ref{T(a,b,c)} we have
\[
\rho(\mathcal{T})\leq\rho(\mathcal{T(}1,1,m-3\mathcal{)}),
\]
with equality holding if and only if $\mathcal{T\cong T(}1,1,m-3).$

If $N_{2}(\mathcal{T})=3$ and $\mathcal{T\cong}T^{k}$, then $T\notin
\{S_{n^{\prime}},S(a,b)\}.$ From Corollary \ref{Ordering Tk}, Lemma
\ref{T(1,,1,)} we have
\[
\rho(\mathcal{T})\leq\rho(F_{n^{\prime}}^{k})<\rho(\mathcal{T(}%
1,1,m-3\mathcal{)}).
\]

If $N_{2}(\mathcal{T})\geq4,$ then there exists a $k$-uinform supertree
$\mathcal{T}^{\prime}$ with $N_{2}(\mathcal{T}^{\prime})=3$ and $\rho
(\mathcal{T}^{\prime})>\rho(\mathcal{T})$ by Lemma \ref{induction}$.$ Thus we
have
\[
\rho(\mathcal{T})<\rho(\mathcal{T}^{\prime})\leq\rho(\mathcal{T(}%
1,1,m-3\mathcal{)}).
\]

The proof is complete.
\end{proof}

\bigskip

\bigskip

\bigskip

\bigskip


\begin{thebibliography}{99}                                                                                               %


\bibitem {Berge}C. Berge, Hypergraph. Combinatorics of Finite sets, third
edition, North-Holland, Amsterdam, 1973.

\bibitem {ChangPZ}K.C. Chang, K. Pearson, T. Zhang, Perron-Frobenius theorem
for nonnegative tensors, Commun. Math. Sci. 6(2008)507-520.

\bibitem {Chang2}K.C. Chang, L. Qi and T. Zhang, A survey on the spectral
theory of nonnegative tensors, Numer. Linear Alg. Appl., 20 (2013) 891-912.

\bibitem {ChangHuang}A. Chang, Q. Huang, Ordering trees by their largest
eigenvalues, Linear Algebra Appl. 370 (2003) 175-184.

\bibitem {Cooper}J. Cooper, A. Dutle, Spectra of uniform hypergraphs, Linear
Algebra Appl. 436 (2012) 3268-3292.

\bibitem {FGH}S. Friedland, A. Gaubert, L. Han, Perron-Frobenius theorems for
nonnegative multilinear forms and extensions, Linear Algebra Appl.,438 (2013) 738-749.

\bibitem {HuHuangLingQi}S. Hu, Z. Huang, C. Ling, L. Qi, On determinants and
eigenvalue theory of tensors, J. Symbolic Comput. 50 (2013) 508-531.

\bibitem {Hu2}S. Hu, L. Qi, J. Shao, Cored hypergraphs and their Laplacian
eigenvalues, Linear Algebra Appl. 439 (2013) 2980-2998.

\bibitem {HuQiShao-power}S. Hu, L. Qi and J. Shao, Cored hypergraphs, power
hypergraphs and their Laplacian eigenvalues, Linear Alg. Appl., 439 (2013) 2980-2998.

\bibitem {Hofm}M. Hofmeister, On the two largest eigenvalues of trees, Linear
Algebra Appl. 260 (1997) 43-59.

\bibitem {LiShaoQi}H. Li, Y.Shao, L. Qi, The extermal spectral radii of
$k$-uniform supertrees, arXiv:1405.7257v1, May 2014.

\bibitem {LinGuo}W. Lin, X. Guo, Ordering trees by their largest eigenvalues,
Linear Algebra Appl. 418 (2006) 450-456.

\bibitem {LuMan}L. Lu, S. Man, Connected hypergraphs with small spectral
radius, arXiv:1402.5402v3, Mar 2014.

\bibitem {Nikiforov}V. Nikiforov, Analytic methods for uniform hypergraphs,
Linear Algebra Appl. 457 (2014) 455-535.

\bibitem {Qi2005}L. Qi, Eigenvalues of a real supersymmetric tensor, Journal
of Symbolic Computation 40 (2005) 1302-1324.

\bibitem {Qi2014}L. Qi, H$^{+}$-eigenvalue of Laplacian and signless Laplacian
tensors, Commun. Math. Sci. 12(2014)1045-1064.

\bibitem {Shao}Y. Shao, A general product of tensors with applications, Linear
Algebra Appl. 439 (2013) 2350-2366.

\bibitem {Yang}Y. Yang, Q. Yang, On some properties of nonegative weakly
irreducible tensors, arXiv: 1111.0713 v3, 2011.

\bibitem {ZhouBu}J. Zhou, L. Sun, W. Wang, C. Bu, Some spectral properties of
uniform hypergraphs, arXiv: 1407.5193 v1, Jul. 2014.
\end{thebibliography}
\end{document}